\documentclass[11pt,a4paper]{amsart}
\usepackage{amsmath, amscd, amssymb}

\usepackage[utf8]{inputenc} 
\usepackage[T1]{fontenc}

\usepackage{graphpap, color}
\usepackage[mathscr]{eucal}
\usepackage{mathrsfs}
\usepackage{pstricks}
\usepackage{cancel}
\usepackage[mathscr]{eucal}
\usepackage{verbatim}
\usepackage[all]{xy}
\usepackage{stmaryrd}
\usepackage[bookmarks, breaklinks, 
]{hyperref}

\def\cA{{\cal A}}


%


\numberwithin{equation}{section}

\newcommand{\CC}{\mathbb{C}}

\newcommand{\RR}{\mathbb{R}}



\newcommand{\cal}{\mathcal}

\def\cC{{\cal C}}
\def\cD{{\cal D}}
\def\cE{{\cal E}}
\def\cF{{\cal F}}

\def\cK{{\cal K}}

\def\cO{{\cal O}}
\def\cP{{\cal P}}














\def\begeq{\begin{equation}}
\def\endeq{\end{equation}}
\def\and{\quad{\rm and}\quad}

\def\and{\quad\text{and}\quad}


\DeclareMathOperator{\im}{im}

\newtheorem{prop}{Proposition}[section]

\newtheorem{theo}[prop]{Theorem}
\newtheorem{lemm}[prop]{Lemma}
\newtheorem{coro}[prop]{Corollary}

\newtheorem{defi}[prop]{Definition}
\newtheorem{conj}[prop]{Conjecture}

\newtheorem{defi-prop}[prop]{Definition-Proposition}


\def\dbar{\overline{\partial}}

\def\beq{\begin{equation}}
\def\eeq{\end{equation}}

\def\bee{\begin{equation}}
\def\eeq{\end{equation}}







\title{The limits of Kähler manifolds under holomorphic deformations}

\subjclass[2010]{32G05, (Primary); 53C55, 32Q15 (Secondary)}

\author[Mu-Lin Li]{Mu-Lin Li}
\address{School of Mathematics, Hunan University, China} \email{mulin@hnu.edu.cn}

\author[Wanmin Liu]{Wanmin Liu}
\address{
} \email{wanminliu@gmail.com}

\date{\today}

\begin{document}

\begin{abstract}
Under mild assumptions on the metric and topology of the central fiber, we prove that the limit of K\"ahler manifolds under holomorphic deformation is still K\"ahler.
\end{abstract}

\maketitle

\section{Introduction}
A {\it smooth family of compact complex manifolds} is defined as a proper holomorphic submersion  $f:X\to B$ between complex manifolds $X$ and $B$.  The following conjecture  regarding families of compact complex manifolds has been a longstanding problem.

\begin{conj} \label{conj-1}
Let $f:X\to\Delta=\{z\in\CC, |z|<1\}$ be a smooth family of compact complex manifolds such that all fibers $X_t:=f^{-1}(t)$ are Kähler for $t\in\Delta^\star:=\Delta\setminus \{0\}$. Then the central fiber $X_0:=f^{-1}(0)$ is a manifold in Fujiki class $\cC$.
\end{conj}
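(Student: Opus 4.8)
The plan is to reduce Conjecture~\ref{conj-1} to the construction of a single \emph{K\"ahler current} on the central fibre, and then to produce such a current as a controlled weak limit of the K\"ahler forms carried by the nearby fibres. Write $n$ for the fibre dimension. By the theorem of Demailly--P\u{a}un, a compact complex manifold lies in Fujiki class $\cC$ if and only if it carries a K\"ahler current, i.e. a closed positive $(1,1)$-current $T$ with $T\ge\omega$ for some Hermitian form $\omega$; so it suffices to construct a K\"ahler current on $X_0$. I would phrase this existence through the Lamari--Hahn--Banach duality: a real $(1,1)$-class on a compact complex manifold contains a positive (resp. strictly positive) current exactly when it pairs non-negatively (resp. strictly positively) with every Gauduchon, or strongly Gauduchon, test form. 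The target thus becomes a strict positivity statement for a pairing between a Bott--Chern $(1,1)$-class and the Aeppli $(n-1,n-1)$-classes of $X_0$.

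The engine is the dual, $(n-1,n-1)$-dimensional, picture. Recall that a Hermitian metric $\gamma$ is \emph{strongly Gauduchon} if $\pa\gamma^{n-1}$ is $\dbar$-exact, a condition satisfied by every K\"ahler metric and stable under the limits we need; such metrics represent classes in Aeppli cohomology $H^{n-1,n-1}_A$, which is dual to $H^{1,1}_{BC}$. First I would fix a Hermitian metric $\Omega$ on the total space $X$; then, using that $f$ is proper and that the fibrewise de Rham cohomology forms a local system near $0$, I would represent the fibrewise strongly Gauduchon metrics $\gamma_t$ ($t\ne 0$) inside a class that extends across $t=0$. The purpose of this normalization is to bound the masses $\int_{X_t}\gamma_t\wedge\Omega$ uniformly as $t\to 0$, which is what makes a limit available at all.

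With uniform mass in hand, weak compactness of positive currents yields a subsequential limit $\gamma_0$ on $X_0$ of the dual fibrewise data; passing the relation $\pa\gamma_t^{n-1}=\dbar(\cdots)$ to the limit shows that $\gamma_0$ is a strongly Gauduchon \emph{current}. I would then test a fixed Hermitian $(1,1)$-class on $X_0$ against $\gamma_0$ and evaluate the pairing as the limit of the corresponding pairings on the K\"ahler fibres $X_t$, where strict positivity is automatic. If the pairing survives the limit with a strict sign, the Lamari duality produces a K\"ahler current on $X_0$, and Demailly--P\u{a}un then gives $X_0\in\cC$, as required.

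The step I expect to be the main obstacle---and the reason the conjecture has resisted a general proof---is the possible \emph{degeneration of the limit}: as $t\to 0$ the mass of $\gamma_t$ may concentrate on a proper analytic subset of $X_0$, or the relevant Bott--Chern class may drift to the boundary of the positive cone, so that the limiting pairing collapses to zero and $\gamma_0$ fails to dominate a Hermitian metric. Overcoming it unconditionally amounts to a lower bound on the limiting pairing that is uniform in $t$, equivalently to ruling out any nonzero positive $\pa\dbar$-exact current obstructing the class on $X_0$. This is exactly the point at which the behaviour of the Hodge and Bott--Chern cohomology across the central fibre must be brought under control, and establishing the requisite non-degeneration of the limiting Bott--Chern class is the crux that any complete proof of the statement has to resolve.
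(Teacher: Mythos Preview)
The statement you are attempting to prove is labeled a \emph{Conjecture} in the paper and is \emph{not} proved there; the paper establishes only the partial result Theorem~\ref{main-2}, under the additional hypotheses that $X_0$ carries a metric $\omega$ with $\partial\dbar\omega=0$, $\partial\omega\wedge\dbar\omega=0$, and that $h^{0,2}_{\dbar}(X_0)=h^{0,2}_{\dbar}(X_t)$. So there is no ``paper's own proof'' to compare against, and your proposal---as you yourself note in the final paragraph---does not close the gap either: the non-degeneration of the limiting pairing, equivalently the absence of a nonzero positive $\partial\dbar$-exact obstruction current on $X_0$, is left as an unresolved crux. That is an honest assessment of the difficulty, but it means the proposal is a strategy outline, not a proof.

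It is still worth contrasting your strategy with what the paper does for its \emph{conditional} theorem. You work on the dual $(n-1,n-1)$ side, passing strongly Gauduchon data to the limit and invoking Lamari--Hahn--Banach to detect a K\"ahler current. The paper instead works directly on the $(1,1)$ side: the cohomological hypothesis $h^2(X_0)=2h^{0,2}_{\dbar}(X_0)+h^{1,1}_{BC}(X_0)$ (derived from the metric and Hodge-number assumptions via Proposition~\ref{prop-good}) forces the fibrewise Bott--Chern groups $H^{1,1}_{BC}(X_t,\CC)$ to form a $C^\infty$-subbundle of $R^2f_*\CC$ across $t=0$ (Theorem~\ref{family-1}). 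One then parallel-transports a K\"ahler class along $\nabla^{1,1}$ using the Demailly--P\u{a}un K\"ahler-cone invariance (Theorem~\ref{Kahler}), bounds the mass against a family of Gauduchon metrics via the Bott--Chern/Aeppli pairing, and passes the K\"ahler forms themselves to a weak limit $T$ on $X_0$ with $\int_{X_0}T_{ac}^n>0$ (Lemma~\ref{main0}). The special metric on $X_0$ then upgrades this, via Wang's criterion (Theorem~\ref{cri}), all the way to K\"ahlerness, not merely Fujiki class~$\cC$. In short, the paper buys control of the limit precisely by \emph{assuming} the extra cohomological and metric data that your approach would need to manufacture; absent those hypotheses, the degeneration you flag is exactly why Conjecture~\ref{conj-1} remains open.
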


The case where $\dim X_t=2$ has been resolved  affirmatively, relying on the
theory of classification of surfaces and Siu's result \cite{Siu} (see also \cite{Bu}, \cite{La}).

 We list some results for the case $\dim X_t \ge 3$.  Hironaka \cite{Hi62} shows that the limit of Kähler manifolds of dimension three may not be Kähler again. Popovici \cite{pop1, pop2} proved that if $X_t$ is projective for $t\neq0$, then $X_0$ is Moishezon under some conditions on the Hodge numbers $h_{\dbar}^{0,1}(X_t)$ or on the metric on $X_0$. Under the same conditions,  Barlet \cite{Ba1} showed that if $X_t$ is Moishezon for $t\neq0$, then $X_0$ is also Moishezon. Rao and Tsai \cite{RT} proved that if there exist uncountably many Moishezon fibers in the family, then any fiber satisfying either of the above two conditions is still Moishezon.

When $X_t$ are Kähler manifolds for $t\neq0$ with $\dim X_t\geq3$, very few results are known, to the best of the authors' knowledge.  To establish that $X_0$ is Kähler, we focus on manifolds that satisfy specific cohomological and metric conditions, inspired by the aforementioned results. Our findings are as follows.

\begin{theo}\label{main-2}
Let $f:X \to \Delta$ be a smooth family of compact complex manifolds, where $X_t$ is Kähler for $t\neq0$. Assume the following conditions hold:
 \begin{itemize}
 \item $X_0$ admits a metric $\omega$ satisfying $\partial\dbar\omega=0$ and $\partial\omega\wedge\dbar\omega=0$.
 \item $h^{0,2}_{\dbar}(X_0)=h^{0,2}_{\dbar}(X_t)$ for $t\neq0$.
 \end{itemize}
 Then $X_0$ is a Kähler manifold.
\end{theo}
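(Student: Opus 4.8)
The plan is to construct a K\"ahler class on $X_0$ as a limit of the K\"ahler classes of the nearby fibres, in the spirit of Popovici's work on deformation limits, using the cohomological hypothesis to keep that limit of the correct Hodge type and the metric hypotheses to force its positivity. Write $n=\dim_{\CC}X_0$. Since the total space $X$ deformation retracts onto $X_0$, fix a $C^\infty$ trivialisation of the local system $R^2f_*\RR$ over $\Delta$ by a smooth family of closed real $2$-forms, identifying every $H^2(X_t;\RR)$ with a fixed real vector space $V$; by Ehresmann the Betti numbers $b_k(X_t)$ are constant. For $t\ne0$ choose a K\"ahler form $\omega_t$ on $X_t$ with $\int_{X_t}\omega_t^{n}=1$, put $c_t:=[\omega_t]\in V$, and fix a Hermitian metric $\Omega$ on $X$ with restrictions $\Omega_t$ to the fibres. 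Observe that $\partial\dbar\omega=0$ together with $\partial\omega\wedge\dbar\omega=0$ forces $\partial\dbar\omega^{k}=0$ for every $k\ge1$; thus $\omega$ is pluriclosed, astheno-K\"ahler and Gauduchon, and all of these properties pass to the restriction of $\omega$ to any submanifold of $X_0$.

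The first step is to show that the classes $c_t$ do not run off to infinity as $t\to0$ — the phenomenon behind collapsing limits. Choosing $\Omega$ so that it restricts on $X_0$ to a form comparable to $\omega$, and using that $\omega$ is Gauduchon ($\partial\dbar\omega^{n-1}=0$), one rewrites the mass $\int_{X_t}\omega_t\wedge\Omega_t^{n-1}$ as an asymptotically cohomological quantity and bounds it through the duality between Bott--Chern and Aeppli cohomology and the Gauduchon class $\{\omega^{n-1}\}$ of $X_0$. Passing to a subsequence $t_j\to0$ we then obtain $c_{t_j}\to c_0\in V$ with $c_0^{n}=\lim_j\int_{X_{t_j}}\omega_{t_j}^{n}=1>0$. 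Transported by the trivialisation, the $\omega_{t_j}$ are $d$-closed real $(1,1)$-forms on $X_0$ that are positive for the complex structures $J_{t_j}$; a weak limit $T_0$ is then a non-zero $d$-closed positive $(1,1)$-current on $X_0$ (its $\Omega_0^{n-1}$-mass equals $1$) with $[T_0]=c_0$. Hence $c_0$ is a nef class with $c_0^{n}>0$, and regularising $T_0$ by Demailly's theorem yields a K\"ahler current in $c_0$, so that $X_0$ belongs to Fujiki class $\cC$.

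It remains to upgrade ``$c_0$ nef and big'' to ``$c_0$ K\"ahler'', and here I would apply the numerical criterion of Demailly--Paun on the class-$\cC$ manifold $X_0$: it suffices to verify $\int_Y c_0^{\dim Y}>0$ for every irreducible analytic subvariety $Y\subseteq X_0$. The hypothesis $h^{0,2}_{\dbar}(X_0)=h^{0,2}_{\dbar}(X_t)$ enters at this point. Combined with upper semicontinuity of the Hodge numbers and the constancy of $b_2$, it controls the Hodge structure of the family near $t=0$ (as in Popovici's work), ensuring that $c_0$ is genuinely of type $(1,1)$ in $H^2(X_0)$ and, with membership in class $\cC$, that $X_0$ satisfies the $\partial\dbar$-lemma in degree $2$; consequently $\int_Y c_0^{\dim Y}$ is the limit of the positive numbers $\int_{Y_{t_j}}c_{t_j}^{\dim Y}$ whenever $Y$ deforms in the family. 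For the remaining, possibly non-deforming, subvarieties the strict positivity follows from the same circle of ideas — a Chiose-type integration by parts against powers of the (restricted) pluriclosed metric, in which $\partial\dbar\omega=0$ discards the boundary term and $\partial\omega\wedge\dbar\omega=0$ annihilates the error term, improving the a priori inequality $\int_Y c_0^{\dim Y}\ge0$ to a strict one. By Demailly--Paun, $c_0$ is then a K\"ahler class, and $X_0$ is K\"ahler.

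The crux — the step I expect to be the real difficulty — is the positivity: the non-collapse of the K\"ahler classes in the second paragraph (equivalently, $c_0^{n}>0$) and the strict inequalities $\int_Y c_0^{\dim Y}>0$ in the third. These are exactly the assertions that fail for general deformation limits, where $X_0$ may be a non-K\"ahler member of class $\cC$; so the full strength of both hypotheses — the pluriclosedness refined by $\partial\omega\wedge\dbar\omega=0$, and the equality of the $(0,2)$-Hodge numbers — must be spent here, whereas the surrounding steps (compactness of currents, Demailly regularisation, the Demailly--Paun criterion) are comparatively soft.
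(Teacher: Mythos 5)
Your overall strategy --- extract a limit of the fibrewise K\"ahler structures as a positive $(1,1)$-object on $X_0$ and use the metric hypotheses to force positivity --- is in the right spirit, but the two steps you yourself single out as the crux are asserted rather than proved, and both are genuine gaps. First, the non-collapse bound: normalising by $\int_{X_t}\omega_t^{n}=1$ does not bound the classes $c_t=[\omega_t]$ in any trivialisation of $R^2f_*\RR$ (one may add to $\omega_t$ large nef classes of vanishing top self-intersection), and the proposed bound on $\int_{X_t}\omega_t\wedge\Omega_t^{n-1}$ via Bott--Chern/Aeppli duality is circular: that pairing is controlled only once the Bott--Chern classes of the $\omega_t$ are known to stay bounded, which is exactly what is in question and is delicate precisely because $h^{1,1}_{BC}$ may jump at $t=0$. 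The paper sidesteps this: the two hypotheses are spent (Propositions \ref{derh} and \ref{prop-good}, Theorem \ref{family-1}) on proving $h^2(X_0)=2h^{0,2}_{\dbar}(X_0)+h^{1,1}_{BC}(X_0)$, which makes $\bigcup_t H^{1,1}_{BC}(X_t,\CC)$ a $C^\infty$-subbundle of $R^2f_*\CC$ across $t=0$; one then parallel-transports a \emph{single} K\"ahler class $\alpha(u)$ along an arc reaching $0$ (K\"ahlerness being preserved generically by \cite[Theorem 0.9]{Dem2}), so the limit class exists by construction and the masses $\int\omega_{\gamma(u)}\wedge g_{\gamma(u)}^{n-1}$ are bounded because the pairing is continuous on the closed arc. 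Note also that ``$c_0$ is nef'' does not follow from $c_0$ being a limit of K\"ahler classes of the \emph{other} fibres; pseudoeffectivity is all the weak limit gives.

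Second, your endgame is both harder than necessary and incomplete: verifying $\int_Y c_0^{\dim Y}>0$ for \emph{every} irreducible subvariety $Y\subseteq X_0$ (so as to invoke the Demailly--P\u{a}un numerical criterion) is left to ``the same circle of ideas'', but the non-deforming subvarieties are precisely the known obstruction and no argument is supplied for them. The paper avoids subvarieties entirely by invoking Wang's criterion (Theorem \ref{cri}, refining \cite{Ch}): for a metric with $\partial\dbar\omega=0$ and $\partial\omega\wedge\dbar\omega=0$, a single closed positive $(1,1)$-current $T$ with $\int_{X_0}T_{ac}^{n}>0$ already forces $X_0$ to be K\"ahler. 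The relevant quantity there is $\int_{X_0}T_{ac}^{n}$ --- controlled in Lemma \ref{main0} via Boucksom's semicontinuity \cite[Proposition 2.1]{Bou} and the constancy of $\int_{X_{\gamma(u)}}\omega_{\gamma(u)}^{n}$ along the transported class --- not the cohomological number $c_0^{n}$; the two differ for singular currents. Replacing your third paragraph by an appeal to Theorem \ref{cri}, and your first two by the parallel-transport construction, recovers the paper's proof.
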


 The construction of non-K\"ahler compact complex manifolds equipped with metrics $\omega$ satisfying $\partial\dbar\omega=0$ and $\partial\omega\wedge\dbar\omega=0$  has attracted considerable attention in complex geometry. See the examples of such nilmanifolds in \cite[Section 2.3]{FT}.

As a corollary, we provide an alternative proof of the result that the limit of Kähler surfaces under holomorphic deformation is also Kähler (Corollary \ref{main-3}). Our proof does not rely on the classification theory of compact complex surfaces.

\begin{coro}\label{main-3}
Let $f:X \to \Delta$ be a smooth family of complex compact surfaces, where $X_t$ is Kähler for $t\neq0$. Then $X_0$ is also Kähler.
\end{coro}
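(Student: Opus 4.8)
The plan is to derive Corollary \ref{main-3} from Theorem \ref{main-2} by verifying, for a compact complex surface $X_0$, that both hypotheses of the theorem hold automatically.

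For the metric hypothesis, take $\omega$ to be a Gauduchon metric on $X_0$; one exists on every compact complex manifold by Gauduchon's theorem. In complex dimension $2$ the Gauduchon equation $\partial\dbar(\omega^{n-1})=0$ is exactly $\partial\dbar\omega=0$, so the first bullet holds; and $\partial\omega\wedge\dbar\omega$ is a form of bidegree $(3,3)$, hence vanishes identically on a surface, so the second bullet holds as well. Thus the metric hypothesis imposes no restriction when $\dim_{\CC}X_0=2$.

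For the cohomological hypothesis we must show $h^{0,2}_{\dbar}(X_0)=h^{0,2}_{\dbar}(X_t)$ for $t\neq0$. The ingredients I would combine are: (i) Ehresmann's theorem, so that $f$ is a $C^\infty$ fiber bundle over $\Delta$ and the Betti numbers $b_k(X_t)$ are constant in $t$; (ii) Serre duality on a compact complex surface, which gives $h^{2,0}_{\dbar}(X)=h^{0,2}_{\dbar}(X)$ for any such $X$, applied to $X_0$ and to the $X_t$; (iii) the $E_1$-degeneration of the Fr\"olicher spectral sequence of a compact complex surface, which in degree two reads $b_2(X_0)=h^{2,0}_{\dbar}(X_0)+h^{1,1}_{\dbar}(X_0)+h^{0,2}_{\dbar}(X_0)$, together with the Hodge decomposition on the K\"ahler fibers giving $b_2(X_t)=h^{2,0}_{\dbar}(X_t)+h^{1,1}_{\dbar}(X_t)+h^{0,2}_{\dbar}(X_t)$; and (iv) the semicontinuity theorem: since $f$ is smooth and proper, $\Omega^p_{X/\Delta}$ is flat over $\Delta$ with fiberwise restriction $\Omega^p_{X_t}$, so $t\mapsto h^{p,q}_{\dbar}(X_t)=\dim H^q(X_t,\Omega^p_{X_t})$ is upper semicontinuous on $\Delta$. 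From (i), (ii), (iii) one gets $2h^{0,2}_{\dbar}(X_0)+h^{1,1}_{\dbar}(X_0)=2h^{0,2}_{\dbar}(X_t)+h^{1,1}_{\dbar}(X_t)$ for small $t\neq0$, while (iv) gives $h^{0,2}_{\dbar}(X_0)\ge h^{0,2}_{\dbar}(X_t)$ and $h^{1,1}_{\dbar}(X_0)\ge h^{1,1}_{\dbar}(X_t)$; hence both inequalities must be equalities, in particular $h^{0,2}_{\dbar}(X_0)=h^{0,2}_{\dbar}(X_t)$. Since Hodge numbers are locally constant on the connected punctured disk $\Delta^\star$, this equality holds for every $t\neq0$. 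Theorem \ref{main-2} then applies and shows that $X_0$ is K\"ahler.

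The step most in need of care is (iii): one must invoke that the Fr\"olicher spectral sequence of a compact complex surface degenerates at $E_1$. This is classical and, importantly for the point of the corollary, independent of the Enriques--Kodaira classification of surfaces (it can be seen from the fact that holomorphic $1$- and $2$-forms on a compact complex surface are $d$-closed); Gauduchon's existence theorem is likewise elementary in spirit. The remaining steps are routine applications of Ehresmann's theorem, Serre duality, and upper semicontinuity of cohomology.
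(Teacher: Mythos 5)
Your proposal is correct and follows essentially the same route as the paper: verify the two hypotheses of Theorem~\ref{main-2} for a surface via a Gauduchon metric (for which $\partial\dbar\omega=0$ and $\partial\omega\wedge\dbar\omega=0$ hold for degree reasons when $n=2$) and via the classification-free $E_1$-degeneration of the Fr\"olicher spectral sequence to get $h^{0,2}_{\dbar}(X_0)=h^{0,2}_{\dbar}(X_t)$. Your step combining constancy of $b_2$, Serre duality, and upper semicontinuity of both $h^{0,2}_{\dbar}$ and $h^{1,1}_{\dbar}$ is in fact a useful elaboration, since the paper's one-line appeal to upper semicontinuity alone only yields an inequality.
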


The structure of this paper is as follows. In Section 2, we recall a criterion for determining whether a compact complex manifold $X$ is Kähler, provided it is equipped with a special metric $\omega$. In Section 3, we construct a special $d$-closed positive $(1,1)$-current $T$ on the central fiber $X_0$, ensuring that the volume $\int_{X_0}T_{ac}^n>0$, under certain numerical conditions on the dimensions of the cohomologies. This is achieved by applying the result of Demailly and  Păun \cite[Theorem 0.9]{Dem2}. In Section 4, we prove the main results of this paper by combining the findings from Sections 2 and 3.

\subsection*{Acknowledgement}

The authors express their gratitude to Xiao-Lei Liu, Sheng Rao, Mengjiao Wang and Zhiwei Wang
for their valuable discussions. The authors sincerely thank the referees for their invaluable comments on the paper.
M.-L. Li acknowledges support from the National Natural Science Foundation of China (NSFC), under grants No. 12271073 and No. 12271412.

\section{Currents on Complex Manifolds}


A compact complex manifold $X$ is said to be {\itshape in Fujiki class $\cC$} if there exists a complex Kähler manifold $Y$ and a surjective meromorphic map $h: Y\to X$. These manifolds were first introduced by Fujiki \cite{F1}.

Let $\cA^{i,j}$ denote the sheaf of smooth $(i,j)$-forms on $X$, and let $\cE^{i,j}(X):=\Gamma(X,\cA^{i,j})$ represent the global smooth $(i,j)$-forms on $X$.

Let $\omega$ be a smooth $(1,1)$-form on $X$, which can be locally written as $\omega=\sqrt{-1}h_{i\bar{j}}dz_i\wedge d\bar{z}_{j}$. The form $\omega$ is called {\itshape positive} (resp. {\itshape semi-positive}) if
$(h_{i\bar{j}})$ is a positive (resp. {\itshape semi-positive}) definite Hermitian matrix. The space of currents of degree $(p,q)$ over $X$ is denoted by $\cD^{p,q}(X)$. A current of degree $(p,q)$ can be regarded as an element $T$ in the dual space $(\cE^{n-p,n-q}(X))^\prime$.
A current $T$ of degree $(p,p)$ is said to be {\itshape positive} if,  for any choice of smooth $(1,0)$-forms $\alpha_1,\cdots,\alpha_{n-p}$ on $X$,
$$
 T\wedge \sqrt{-1}\alpha_1\wedge\overline{\alpha_1}\cdots \wedge \sqrt{-1}\alpha_{n-p}\wedge\overline{\alpha_{n-p}}
$$
is a positive measure. For a closed positive $(1, 1)$-current $T$ on $X$, let $T_{ac}$ denote the absolutely continuous part of $T$ in its Lebesgue decomposition $T = T_{ac} + T_{sg}$. So $T_{ac}$ is considered as a $(1, 1)$-form
with $L_{loc}^1$ coefficients, so that the product $T^m_{ac}$ is taken pointwise for $0\le m\le n$.

A {\itshape Kähler current} on a compact complex space $X$ is defined to be a closed positive current $T$ of bidegree $(1,1)$  satisfying $T\ge\epsilon \omega$, where $\epsilon$ is a positive number and $\omega$ is a smooth positive Hermitian form on $X$.

There are several other methods to guarantee that a manifold belongs to Fujiki class $\cC$. One of them is the following theorem,  which combines \cite[Theorem 5]{Va} and \cite[Theorem 3.4]{Dem2}.

\begin{theo}\label{equiv1}
Let $X$ be a compact complex manifold of dimension $n$. The
following conditions are equivalent.
\begin{itemize}
\item[(1)] $X$ is in Fujiki class $\cC$.
\item[(2)] There exists a Kähler current $T$ on $X$.
\end{itemize}
\end{theo}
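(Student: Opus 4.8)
The plan is to prove the two implications separately. The direction (1)$\Rightarrow$(2) is elementary and I would handle it by an explicit direct-image construction; the direction (2)$\Rightarrow$(1) is the substantial one and rests on Demailly's regularization theorem for closed positive currents together with Hironaka resolution.

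\emph{The direction (1)$\Rightarrow$(2).} Suppose $X$ is in Fujiki class $\cC$, with $h:Y\to X$ a surjective meromorphic map from a compact K\"ahler manifold $Y$. First I would resolve the indeterminacy of $h$ by a finite sequence of blow-ups of $Y$ along smooth centres; since such blow-ups preserve the K\"ahler condition, this produces a compact K\"ahler manifold $\tilde Y$ with K\"ahler form $\tilde\omega$ and a \emph{surjective holomorphic} map $g:\tilde Y\to X$. Writing $N=\dim\tilde Y$, $r=N-n\ge 0$, and fixing any Hermitian form $\omega_0$ on $X$, I would then verify that $T:=g_*(\tilde\omega^{\,r+1})$ is a K\"ahler current. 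It is a closed positive $(1,1)$-current because the direct image of a closed positive current under a proper holomorphic map is closed positive and the bidegree drops by $(r,r)$. By compactness there is $\epsilon>0$ with $\tilde\omega-\epsilon\,g^*\omega_0\ge 0$ on $\tilde Y$, hence $\tilde\omega^{\,r+1}-\epsilon\,g^*\omega_0\wedge\tilde\omega^{\,r}\ge 0$; pushing forward and using the projection formula gives $T\ge\epsilon\,\omega_0\wedge g_*(\tilde\omega^{\,r})$. Finally $g_*(\tilde\omega^{\,r})$ is a closed positive $(0,0)$-current, hence (being $d$-closed of degree $0$) a nonnegative constant, and evaluating it over the Zariski-dense open locus where $g$ is a submersion identifies it with the fibre volume $\int_{g^{-1}(x)}\tilde\omega^{\,r}$, a positive constant $c_0$ (the smooth fibres over the connected base are mutually homologous in $\tilde Y$). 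Therefore $T\ge\epsilon c_0\,\omega_0$, so $T$ is a K\"ahler current.

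\emph{The direction (2)$\Rightarrow$(1).} Suppose $T$ is a K\"ahler current on $X$, say $T\ge\epsilon_0\omega_0$. The first step is Demailly's regularization theorem: replace $T$ by a closed positive current $T_1$ with \emph{analytic singularities} still satisfying $T_1\ge\tfrac12\epsilon_0\,\omega_0$ (the loss of positivity in the regularization can be made arbitrarily small, and it is here that one genuinely uses that $T$ is a K\"ahler current and not merely a closed positive current). The second step is to apply Hironaka resolution to the ideal cutting out the singular set of $T_1$: one obtains a proper modification $\mu:\tilde X\to X$, a composition of blow-ups along smooth centres, with $\mu^*T_1=[D]+\beta$, where $D$ is an effective $\RR$-divisor with simple normal crossing support and $\beta$ is a smooth closed real $(1,1)$-form. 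Pulling back the inequality gives $[D]+\beta\ge\tfrac12\epsilon_0\,\mu^*\omega_0$; since $\beta$ is continuous and the support of $D$ has empty interior, this forces $\beta\ge\tfrac12\epsilon_0\,\mu^*\omega_0$ everywhere on $\tilde X$, so $\beta\ge 0$, with $\beta>0$ outside the exceptional locus $E$ of $\mu$. The third step is to upgrade $\beta$ to a genuine K\"ahler form: because $\mu$ is built from smooth blow-ups, $\mu^*\omega_0$ is semipositive with kernel at each point exactly the tangent space to the fibre of $\mu$, and there is an effective $\mu$-exceptional divisor $G$ with $\cO_{\tilde X}(-G)$ relatively ample over $X$, carrying a smooth metric $h$ whose curvature is positive in the fibre directions; a Schur-complement (``$\sqrt{\epsilon}$'') computation then shows that $\beta+\epsilon\,\Theta_h(\cO_{\tilde X}(-G))$ is a smooth closed \emph{positive} $(1,1)$-form on all of $\tilde X$ for $\epsilon>0$ small, i.e.\ a K\"ahler form. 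Hence $\tilde X$ is K\"ahler, and since $\mu:\tilde X\to X$ is a surjective holomorphic (in particular meromorphic) map, $X$ is in Fujiki class $\cC$.

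\emph{Main obstacle.} The content of the non-trivial direction lies entirely in two analytic inputs: Demailly's regularization theorem, which is what allows one to trade the abstract positive current for one with analytic singularities amenable to resolution while retaining a definite amount of strict positivity, and the final step, where one must cancel the degeneration of $\beta$ along the exceptional fibres by a relatively positive correction supported there \emph{without} introducing negativity nearby. This matching of the two curvature estimates is the delicate point, and it is exactly where the structure of $\mu$ as a composition of blow-ups with smooth centres is used.
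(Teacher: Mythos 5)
This statement is imported by the paper as a quotation of Demailly--P\u{a}un \cite[Theorem 3.4]{Dem2}; the paper itself contains no proof of it, so there is nothing internal to compare against. Your two implications follow the standard argument from the literature and are correct in outline: the direct-image construction $T=g_*(\tilde\omega^{\,r+1})$ with the projection formula and the constancy of the $(0,0)$-current $g_*(\tilde\omega^{\,r})$ for (1)$\Rightarrow$(2), and Demailly regularization to analytic singularities, Hironaka principalization giving $\mu^*T_1=[D]+\beta$, and the relatively ample exceptional correction for (2)$\Rightarrow$(1), are precisely the ingredients of the cited proof. The only caveat is that the two analytic inputs you name (regularization with small loss of positivity, and the existence of an effective $\mu$-exceptional $G$ with $\cO_{\tilde X}(-G)$ relatively ample for a composition of smooth blow-ups) are used as black boxes, but that is appropriate for a result the paper itself treats as known.
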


If the compact complex manifold $X$ is equipped with a special metric, we can derive a criterion for $X$ to be Kähler.
\begin{theo}\label{cri}\cite[Theorem 1.2]{Wang}
Let $X$ be an $n$-dimensional compact complex manifold with a metric $\omega$ satisfying $\partial\dbar\omega=0$ and $\partial\omega\wedge\dbar\omega=0$. If there is a closed positive current $T$ on $X$ such that { $\int_XT_{ac}^n>0$}, where $T_{ac}$ is the absolutely continuous part of the Lebesgue decomposition of $T$, then $X$ is Kähler.
\end{theo}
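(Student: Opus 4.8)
The plan is to argue in two stages. First I would combine the closed positive current $T$ with $\int_X T_{ac}^n>0$ and the special structure of $\omega$ to produce a \emph{K\"ahler current} on $X$ --- so that $X$ lands in Fujiki class $\cC$ and, thanks to the $\partial\dbar$-lemma valid there, behaves cohomologically like a K\"ahler manifold. Then I would feed that K\"ahler current into the Harvey--Lawson intrinsic characterisation of K\"ahler manifolds to upgrade ``class $\cC$'' to ``K\"ahler''. A preliminary remark I would record at the outset: from the identity
\[
\partial\dbar(\omega^{k})=k(k-1)\,\omega^{k-2}\wedge\partial\omega\wedge\dbar\omega+k\,\omega^{k-1}\wedge\partial\dbar\omega ,
\]
the hypotheses $\partial\dbar\omega=0$ and $\partial\omega\wedge\dbar\omega=0$ force $\partial\dbar(\omega^{k})=0$ for all $1\le k\le n-1$, so $\omega$ is in particular Gauduchon and astheno-K\"ahler; what the proof really uses is the resulting licence to integrate by parts against every power $\omega^{n-k}$.

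For the first stage I would apply Demailly's regularisation theorem to $T$, producing closed positive $(1,1)$-currents $T_{j}=T+dd^c\psi_{j}$ with analytic singularities, converging weakly to $T$ and satisfying $T_{j}\ge-\eps_{j}\omega$ with $\eps_{j}\downarrow 0$. The hypothesis $\int_X T_{ac}^n>0$ says that the $\partial\dbar$-class of $T$ has positive volume, and the task is to promote this to an honest K\"ahler current. Here I would run the Demailly--P\u{a}un mass estimate \cite{Dem2} in the non-K\"ahler form developed by Chiose and Popovici: one controls the masses $\int_X(T_{j}+\eps_{j}\omega)^{m}\wedge\omega^{n-m}$ --- where the vanishings $\partial\dbar\omega^{k}=0$ from the preliminary remark are exactly what validate the successive integrations by parts --- and shows that positivity of $\int_X T_{ac}^n$ is incompatible with the non-existence of a K\"ahler current. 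The conclusion is a closed positive current $S$, a correction of some $T_{j}$, with $S\ge\delta\omega$ for some $\delta>0$. By Theorem \ref{equiv1}, $X$ then lies in Fujiki class $\cC$ and in particular satisfies the $\partial\dbar$-lemma.

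For the second stage, suppose $X$ were not K\"ahler. By the Harvey--Lawson characterisation (or its refinements via Lamari-type duality) there would then exist a non-zero positive $(n-1,n-1)$-current $\Gamma$ on $X$ with $\partial\dbar\Gamma=0$ which is cohomologically trivial, i.e.\ $\int_X\Gamma\wedge\beta=0$ for every $d$-closed $(1,1)$-form $\beta$. Since $X$ is in class $\cC$ and hence satisfies the $\partial\dbar$-lemma, the Bott--Chern class of the K\"ahler current $S$ is represented by a smooth closed $(1,1)$-form $\alpha$; then $\int_X\Gamma\wedge S=\int_X\Gamma\wedge\alpha=0$ (the first equality because $S-\alpha$ is $dd^c$-exact and $\partial\dbar\Gamma=0$, the second by cohomological triviality of $\Gamma$), while $S-\delta\omega\ge 0$ forces $\int_X\Gamma\wedge S\ge\delta\int_X\Gamma\wedge\omega=\delta\,\lVert\Gamma\rVert_\omega>0$ because $\Gamma\ne 0$ (the products being understood via Demailly's theory after the usual regularisation of $S$ to analytic singularities). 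This contradiction proves $X$ is K\"ahler. The genuinely hard step is the first one: on a general compact complex manifold a closed positive $(1,1)$-current of positive volume need not have a K\"ahler current in its $\partial\dbar$-class --- otherwise no metric hypothesis would be needed --- and essentially the whole force of the assumptions $\partial\dbar\omega=0$ and $\partial\omega\wedge\dbar\omega=0$ is spent in making the Demailly--P\u{a}un/Chiose mass argument go through in the absence of a K\"ahler class; by comparison the second stage is a short Stokes computation, with the special metric entering it only through the genuine positivity $\lVert\Gamma\rVert_\omega>0$ of the mass of $\Gamma$.
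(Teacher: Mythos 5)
A preliminary remark on the comparison you were asked to survive: the paper does not prove Theorem~\ref{cri} at all --- it is quoted verbatim from \cite[Theorem 1.2]{Wang} --- so your proposal can only be measured against the argument in that reference. At the level of architecture you have reconstructed it correctly: one first shows that $\int_X T_{ac}^n>0$, together with the vanishings $\partial\dbar(\omega^k)=0$ for all $k$ (your preliminary identity is right), forces the Bott--Chern class of $T$ to contain a K\"ahler current --- a Hermitian analogue of Boucksom's ``positive volume implies big'', proved by the Demailly--P\u{a}un mass-concentration method in the form developed by Chiose and Popovici --- so that $X$ is in class $\cC$ by Theorem~\ref{equiv1}; one then invokes Chiose's theorem \cite{Ch} that a class-$\cC$ manifold carrying a metric with $\partial\dbar\omega=0$ is K\"ahler. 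Your first stage is a pointer to the correct machinery rather than a proof, but it points at the right place and correctly locates where the metric hypotheses are spent.

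Your second stage, however, contains a genuine and non-removable error. As written it uses only (i) the existence of a K\"ahler current $S\ge\delta\omega$ and (ii) the positivity $\int_X\Gamma\wedge\omega>0$, which holds for \emph{every} Hermitian metric. If that Stokes computation were valid it would prove that every compact complex manifold of Fujiki class $\cC$ is K\"ahler; this is false (Hironaka's Moishezon non-K\"ahler threefold is in class $\cC$, hence carries a K\"ahler current by Theorem~\ref{equiv1}, and carries a Gauduchon metric like any compact complex manifold). The false step is the pair of assertions $\int_X\Gamma\wedge S=\int_X\Gamma\wedge\alpha=0$ and $\int_X\Gamma\wedge S\ge\delta\lVert\Gamma\rVert_\omega$: the product of the positive $(n-1,n-1)$-current $\Gamma$ with the singular current $S=\alpha+dd^c\varphi$ is not defined, and regularisation does not rescue both inequalities simultaneously --- replacing $\varphi$ by Demailly's smooth approximants $\varphi_j$ makes $\int_X\Gamma\wedge(\alpha+dd^c\varphi_j)=0$ legitimate (since $\partial\dbar\Gamma=0$), but the lower bound degrades to $(\delta-\lambda_j)\lVert\Gamma\rVert_\omega$ with $\lambda_j$ governed by the Lelong numbers of $S$, which need not drop below $\delta$. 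This is exactly why Chiose's theorem is nontrivial: its proof runs the Tosatti--Weinkove Monge--Amp\`ere machinery and mass estimates a second time, and the metric hypotheses are consumed there in an essential way --- not, as you assert, only through $\lVert\Gamma\rVert_\omega>0$. Your stage two must therefore be replaced wholesale by that argument (or by a citation to \cite{Ch}).
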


\section{Cohomologies}
There are several different types of cohomology theories for compact complex manifolds. We consider three of them: the Bott--Chern cohomology, the Dolbeault cohomology and the de~Rham cohomology.

\subsection{Cohomologies of Complex Manifolds}

Let $\cE^{i,j}(X)=\Gamma(X,\cA^{i,j})$ denote the global smooth $(i,j)$-forms on $X$, where $\cA^{i,j}$ is the sheaf of smooth $(i,j)$-forms on $X$. Then, we have a double complex $(\cE^{\bullet,\bullet}(X)$, $\partial,\dbar)$.  Let $\cE^k(X)=\oplus_{i+j=k}\cE^{i,j}(X)$, and define the total complex $(\cE^{\bullet}(X), d=\partial+\dbar)$. There are two natural filtrations of the complex $(\cE^{\bullet,\bullet}(X),\partial,\dbar)$, namely, for $p,q\in\mathbb{N}$ and $k\in\mathbb{N}$:
\begin{equation}\label{seq}\nonumber
{^{\prime}F}^p(\cE^{k}(X)):=\bigoplus_{r+s=k, r\ge p} \cE^{r,s}(X),\quad \text{and} \quad {^{\prime\prime}F}^q(\cE^{k}(X)):=\bigoplus_{r+s=k, s\ge q} \cE^{r,s}(X),
\end{equation}
which induce two spectral sequences:
$$ ({^\prime E}^{\bullet,\bullet}_r,\ {^\prime d}_r)\quad\quad \text{and }\quad\quad({^{\prime\prime} E}^{\bullet,\bullet}_r,\  {^{\prime\prime}d}_r), \quad r\in\mathbb{N}.$$

These spectral sequences are called the {\itshape Hodge spectral sequence} and the {\itshape Frölicher spectral sequence}, respectively.
Both spectral sequences converge to the de~Rham cohomology $H^{\bullet}_{dR}(X,\CC)$.

For the differential complex $(\cE^{p,\bullet},\dbar)$, the {\em Dolbeault cohomology} is defined as
$$
H^{\bullet,\bullet}_{\dbar}(X,\CC):=\frac{\ker \dbar}{\im \dbar}.
$$
The {\em Bott--Chern cohomology} of $X$ is the bi-graded algebra
$$ H^{\bullet,\bullet}_{BC}(X,\CC) \;:=\; \frac{\ker\partial\cap\ker\dbar}{\im\partial\dbar}.$$
Let $h_{\dbar}^{p,q}(X)=\dim_{\CC} H^{p,q}_{\dbar}(X,\CC)$, $h^{k}(X)=\dim_{\CC} H^{k}_{dR}(X,\CC)$, and  $h_{BC}^{p,q}(X)=\dim_{\CC} H^{p,q}_{BC}(X,\CC)$.

The {\em Aeppli cohomology} of $X$ is the bi-graded $H^{\bullet,\bullet}_{BC}(X,\CC)$-module
$$ H^{\bullet,\bullet}_{A}(X,\CC) \;:=\; \frac{\ker\partial\dbar}{\im\partial+\im\dbar} \;.$$

The following theorem is well-known, and we refer to \cite[Lemma 2.5]{Sch} for a proof.

\begin{theo}\label{pair}
Let $X$ be a compact complex manifold of complex of dimension n. For any $(p,q)\in\mathbb{N}^2$, the wedge $\wedge$-operator induces a non-degeneration bilinear map
\beq
\begin{CD}
  H^{p,q}_{BC}(X,\CC)\times H^{n-p,n-q}_{A}(X,\CC)@>^\wedge>>H^{n,n}_{A}(X,\CC)@>^{\int_{X}}>>\mathbb{C}
\end{CD}\nonumber
\eeq
\beq
\begin{CD}
  (\alpha,\beta) @>>>\{\alpha\}\cdot\{\beta\}.
\end{CD}\nonumber
\eeq
\end{theo}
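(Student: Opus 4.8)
The plan is to deduce the statement from the Hodge theory of the Bott--Chern and Aeppli Laplacians. First I would verify that the bilinear map descends to cohomology. If $\alpha\in\ker\partial\cap\ker\dbar$ has bidegree $(p,q)$ and $\beta\in\ker\partial\dbar$ has bidegree $(n-p,n-q)$, then $\alpha\wedge\beta$ is of bidegree $(n,n)$, hence lies in $\ker\partial\dbar$ for trivial degree reasons and represents a class in $H^{n,n}_A(X,\CC)$; replacing $\alpha$ by $\alpha+\partial\dbar\gamma$ changes $\alpha\wedge\beta$ by $\partial\dbar\gamma\wedge\beta$, which a short Leibniz computation (using $\partial\dbar\beta=0$, e.g.\ $\dbar\gamma\wedge\partial\beta=\dbar(\gamma\wedge\partial\beta)$) rewrites as an element of $\im\partial+\im\dbar$, while replacing $\beta$ by $\beta+\partial\mu+\dbar\nu$ changes $\alpha\wedge\beta$ by $\pm\partial(\alpha\wedge\mu)\pm\dbar(\alpha\wedge\nu)$ since $\partial\alpha=\dbar\alpha=0$; so the class in $H^{n,n}_A(X,\CC)$ is well defined. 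I would also note that in bidegree $(n,n)$ one has $\im\partial+\im\dbar=\partial\,\cE^{n-1,n}(X)+\dbar\,\cE^{n,n-1}(X)=d\,\cE^{2n-1}(X)$ (because $\dbar$ and $\partial$ annihilate $\cE^{n-1,n}(X)$ and $\cE^{n,n-1}(X)$ respectively, for degree reasons), whence $H^{n,n}_A(X,\CC)\cong H^{2n}_{dR}(X,\CC)\cong\CC$ and the composite with $\int_X$ is meaningful.

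Next, following Schweitzer \cite{Sch}, I would fix a Hermitian metric on $X$ and introduce the associated fourth-order, self-adjoint, elliptic Bott--Chern Laplacian $\Delta_{BC}$ and Aeppli Laplacian $\Delta_A$ on $\cE^{p,q}(X)$, whose kernels are $\ker\partial\cap\ker\dbar\cap\ker(\partial\dbar)^\ast$ and $\ker\partial\dbar\cap\ker\partial^\ast\cap\ker\dbar^\ast$ respectively. Ellipticity yields finite-dimensional harmonic spaces $\cH^{p,q}_{BC}$ and $\cH^{p,q}_A$, together with Hodge decompositions of $\cE^{p,q}(X)$ identifying $\cH^{p,q}_{BC}\cong H^{p,q}_{BC}(X,\CC)$ and $\cH^{p,q}_A\cong H^{p,q}_A(X,\CC)$. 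The crucial point is that the conjugate-linear Hodge-star operator $\bar\ast\colon\cE^{p,q}(X)\to\cE^{n-p,n-q}(X)$, $\bar\ast\alpha:=\ast\bar\alpha$, relates $\partial$ and $\dbar$ to their formal $L^2$-adjoints $\partial^\ast$, $\dbar^\ast$, so that (up to signs) it conjugates $\Delta_{BC}$ acting on $(p,q)$-forms into $\Delta_A$ acting on $(n-p,n-q)$-forms; concretely, $\alpha\in\ker\partial\cap\ker\dbar\cap\ker(\partial\dbar)^\ast$ is carried to $\bar\ast\alpha\in\ker\partial\dbar\cap\ker\partial^\ast\cap\ker\dbar^\ast$. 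Thus $\bar\ast$ restricts to a conjugate-linear isomorphism $\cH^{p,q}_{BC}\xrightarrow{\ \sim\ }\cH^{n-p,n-q}_A$; in particular $\dim_\CC H^{p,q}_{BC}(X,\CC)=\dim_\CC H^{n-p,n-q}_A(X,\CC)$.

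For non-degeneracy I would argue as follows. Given $0\neq\{\alpha\}\in H^{p,q}_{BC}(X,\CC)$, take its $\Delta_{BC}$-harmonic representative $\alpha\neq0$ and set $\beta:=\bar\ast\alpha\in\cH^{n-p,n-q}_A$, which represents a class $\{\beta\}\in H^{n-p,n-q}_A(X,\CC)$. Then
\[
\{\alpha\}\cdot\{\beta\}=\int_X\alpha\wedge\bar\ast\alpha=\|\alpha\|_{L^2}^2>0,
\]
so the pairing has trivial left kernel. Since the two cohomology spaces have equal finite dimension by the previous paragraph, a bilinear pairing with trivial left kernel is automatically perfect, which gives the asserted non-degeneracy and finishes the proof.

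I expect the genuine obstacle to be the analytic input of the middle paragraph: the ellipticity of the fourth-order operators $\Delta_{BC}$, $\Delta_A$, the Hodge decompositions realizing Bott--Chern and Aeppli cohomology by harmonic forms, and the verification that $\bar\ast$ intertwines the two Laplacians. All of this is established in \cite{Sch}, so in practice I would cite it directly; the well-definedness check and the closing dimension count are then purely formal.
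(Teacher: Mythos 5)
Your argument is correct and is essentially the proof given in the cited source: the paper itself offers no proof of this statement beyond the reference to \cite[Lemma 2.5]{Sch}, and Schweitzer's proof is exactly the route you take (well-definedness of the pairing, the elliptic Laplacians $\Delta_{BC}$ and $\Delta_A$ with their harmonic representatives, the conjugate-linear isomorphism $\bar\ast\colon\mathcal{H}^{p,q}_{BC}\to\mathcal{H}^{n-p,n-q}_{A}$, and the $L^2$-norm computation giving trivial left kernel plus the dimension count). No discrepancies to report.
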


Fix a Hermitian metric on $X$, we can define the following differential operator,
$$
\Delta_{BC}:=(\partial\dbar)(\partial\dbar)^*+(\partial\dbar)^*(\partial\dbar)+(\dbar^*\partial)(\dbar^*\partial)^*+(\dbar^*\partial)^*(\dbar^*\partial)+\dbar^*\dbar+\partial^*\partial,
$$
which is a self-adjoint elliptic differential operator. Therefore, the following results hold.

\begin{prop}
Let $X$ be a compact complex manifold endowed
with a Hermitian metric. Then there exists an orthogonal decomposition

\beq\nonumber
\cE^{p,q}(X)=\ker\Delta_{BC}\oplus\im\partial\dbar\oplus(\im\partial^*+\im\dbar^*),
\eeq
and an isomorphism
\beq\nonumber
H^{\bullet,\bullet}_{BC}(X,\CC)\cong \ker\Delta_{BC}.
\eeq
\end{prop}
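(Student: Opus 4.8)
The plan is to deduce the statement from the general Hodge theory of self-adjoint elliptic operators on a compact manifold, together with an explicit description of $\ker\Delta_{BC}$ and of the remaining summands. First I would record that $\Delta_{BC}$ is formally self-adjoint and non-negative: integration by parts gives, for $\alpha\in\cE^{p,q}(X)$,
\[
\langle\Delta_{BC}\alpha,\alpha\rangle=\|\partial\dbar\alpha\|^2+\|(\partial\dbar)^*\alpha\|^2+\|\dbar^*\partial\alpha\|^2+\|(\dbar^*\partial)^*\alpha\|^2+\|\dbar\alpha\|^2+\|\partial\alpha\|^2,
\]
so $\Delta_{BC}\alpha=0$ if and only if $\partial\alpha=0$, $\dbar\alpha=0$ and $(\partial\dbar)^*\alpha=0$ (once $\partial\alpha=\dbar\alpha=0$ the other three terms vanish automatically). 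Since $\Delta_{BC}$ is elliptic, the standard theory applies: $\ker\Delta_{BC}$ is finite dimensional, $\Delta_{BC}$ has closed range, and $\cE^{p,q}(X)=\ker\Delta_{BC}\oplus\im\Delta_{BC}$ is an $L^2$-orthogonal decomposition admitting a Green operator.

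Next I would pin down $\im\Delta_{BC}$. Expanding $\Delta_{BC}\alpha$ term by term and using $(\partial\dbar)^*=\dbar^*\partial^*$ and $(\dbar^*\partial)^*=\partial^*\dbar$, each of the six summands lies in $\im\partial\dbar$, $\im\dbar^*$ or $\im\partial^*$, whence $\im\Delta_{BC}\subseteq\im\partial\dbar+\im\partial^*+\im\dbar^*$. For the reverse inclusion I would check that $\ker\Delta_{BC}$ is orthogonal to each of $\im\partial\dbar$, $\im\partial^*$, $\im\dbar^*$: this is immediate from $(\partial\dbar)^*\alpha=0$, $\partial\alpha=0$, $\dbar\alpha=0$ for $\alpha$ harmonic. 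Hence $\im\partial\dbar+\im\partial^*+\im\dbar^*\subseteq(\ker\Delta_{BC})^\perp=\im\Delta_{BC}$, so the two sides agree. Finally, from $\partial\partial=\dbar\dbar=0$, equivalently $(\partial^*)^2=(\dbar^*)^2=0$, one gets $\langle\partial\dbar\beta,\partial^*\gamma\rangle=0$ and $\langle\partial\dbar\beta,\dbar^*\gamma\rangle=0$, i.e. $\im\partial\dbar\perp(\im\partial^*+\im\dbar^*)$; this promotes the decomposition to the orthogonal direct sum $\cE^{p,q}(X)=\ker\Delta_{BC}\oplus\im\partial\dbar\oplus(\im\partial^*+\im\dbar^*)$.

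For the isomorphism $H^{p,q}_{BC}(X,\CC)\cong\ker\Delta_{BC}$ I would show each Bott-Chern class has a unique harmonic representative. Injectivity of the natural map $\ker\Delta_{BC}\to H^{p,q}_{BC}(X,\CC)$ follows since $\ker\Delta_{BC}\subseteq\ker\partial\cap\ker\dbar$ while $\ker\Delta_{BC}\cap\im\partial\dbar=\{0\}$ by orthogonality. For surjectivity, take $\gamma$ with $\partial\gamma=\dbar\gamma=0$ and decompose $\gamma=\gamma_h+\partial\dbar\beta+\eta$ according to the orthogonal splitting, with $\gamma_h$ harmonic and $\eta=\partial^*\mu+\dbar^*\nu$. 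Since $\gamma_h$ and $\partial\dbar\beta$ are both $\partial$- and $\dbar$-closed, so is $\eta$, and then $\|\eta\|^2=\langle\eta,\partial^*\mu+\dbar^*\nu\rangle=\langle\partial\eta,\mu\rangle+\langle\dbar\eta,\nu\rangle=0$, forcing $\eta=0$; hence $[\gamma]=[\gamma_h]$.

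The only input that is not purely formal is the ellipticity of the fourth-order operator $\Delta_{BC}$ — one must verify that its principal symbol is invertible away from the zero section, so that the Fredholm package (finite-dimensional kernel, closed range, orthogonal $\ker$/$\im$ decomposition) is available. This is exactly what is asserted in the excerpt preceding the statement, so I would simply invoke it; granting it, the rest is routine Hodge-theoretic bookkeeping and presents no real obstacle.
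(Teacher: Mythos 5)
The paper offers no proof of this proposition at all --- it simply cites \cite[Theorem 2.3]{A1} after asserting that $\Delta_{BC}$ is self-adjoint and elliptic --- and your argument is precisely the standard elliptic Hodge-theoretic proof given in that reference: identify $\ker\Delta_{BC}=\ker\partial\cap\ker\dbar\cap\ker(\partial\dbar)^*$ via the six-term energy identity, match $\im\Delta_{BC}$ with $\im\partial\dbar+\im\partial^*+\im\dbar^*$ by the two inclusions, check the mutual orthogonality, and extract the harmonic representative. All the steps check out, and you correctly isolate the one non-formal ingredient (invertibility of the principal symbol of the fourth-order operator $\Delta_{BC}$), which is exactly what the paper itself takes as given.
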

\begin{proof}
For a proof, we refer to \cite[Theorem 2.2]{Sch}.
\end{proof}

We also recall the classical result of Kodaira and Spencer concerning the upper semi-continuity of the kernel dimensions.
\begin{prop}
Let $f:X\to B$ be a smooth family of compact complex manifolds, and $X_b$ be the fiber of $f$ over $b\in B$. Then, for every $p\,,q\in\mathbb{N}$, the function
\beq\nonumber
b\in B\mapsto \dim_{\CC}H^{p,q}_{BC}(X_b,\CC)
\eeq
is upper semi-continuous.
\end{prop}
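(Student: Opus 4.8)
The plan is to reduce the statement to the classical upper semicontinuity of the dimension of the kernel of a smoothly varying family of self-adjoint elliptic operators, using the Bott--Chern Laplacian $\Delta_{BC}$ introduced above. Since $f\colon X\to B$ is a proper holomorphic submersion, by Ehresmann's theorem it is, locally over $B$, a $C^\infty$ fibre bundle; so, fixing a point $b_0\in B$, I would choose a contractible neighbourhood $U\ni b_0$ together with a diffeomorphism $f^{-1}(U)\cong U\times M$ over $U$, where $M$ is the underlying $C^\infty$ manifold of $X_{b_0}$. Pulling back the fibrewise complex structures gives a smooth family $\{J_b\}_{b\in U}$ of complex structures on the fixed manifold $M$, and the spaces $\cE^{p,q}(X_b)$ become the $(p,q)$-components $\cE^{p,q}_{J_b}(M)$ inside the fixed space $\cE^\bullet(M)$, i.e. sections of a smooth (finite-rank) complex vector bundle $E^{p,q}\to U\times M$ whose restriction to $\{b\}\times M$ is $\Lambda^{p,q}_{J_b}T^*M$.

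Next I would fix a Hermitian metric on $f^{-1}(U)$ and restrict it to the fibres, producing a smooth family of Hermitian metrics $g_b$ on $(M,J_b)$. With respect to the pair $(J_b,g_b)$ one forms the fourth-order Bott--Chern Laplacian $\Delta_{BC,b}$; by construction its coefficients depend smoothly on $b$, so $\{\Delta_{BC,b}\}_{b\in U}$ is a smooth family of self-adjoint elliptic operators acting on the bundle $E^{p,q}$. By the decomposition recalled in the previous proposition one has the canonical isomorphism $H^{p,q}_{BC}(X_b,\CC)\cong\ker\Delta_{BC,b}$, hence
\[
\dim_{\CC}H^{p,q}_{BC}(X_b,\CC)=\dim_{\CC}\ker\Delta_{BC,b},
\]
and it suffices to prove that $b\mapsto\dim\ker\Delta_{BC,b}$ is upper semicontinuous at $b_0$.

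For this I would use the standard spectral-projection argument. Completing in suitable Sobolev norms, each $\Delta_{BC,b}$ extends to a nonnegative self-adjoint operator with discrete spectrum and compact resolvent on a fixed Hilbert space $\mathcal H$, and $b\mapsto(\Delta_{BC,b}+\Id)^{-1}$ is a norm-continuous family of compact operators. Choosing $\epsilon>0$ strictly smaller than the least positive eigenvalue of $\Delta_{BC,b_0}$, the value $\epsilon$ lies in the resolvent set of $\Delta_{BC,b_0}$, hence, by continuity of the spectrum, also of $\Delta_{BC,b}$ for $b$ in a smaller neighbourhood $U'\ni b_0$; letting $P_b$ be the spectral projection of $\Delta_{BC,b}$ onto eigenvalues in $[0,\epsilon)$, the map $b\mapsto P_b$ is norm-continuous on $U'$, so $\rank P_b$ is locally constant and equal to $\rank P_{b_0}=\dim\ker\Delta_{BC,b_0}$. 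Since $\ker\Delta_{BC,b}\subseteq P_b(\mathcal H)$, this yields $\dim\ker\Delta_{BC,b}\le\dim\ker\Delta_{BC,b_0}$ for $b\in U'$, the desired upper semicontinuity.

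The main point that must be handled with care, rather than merely quoted, is the one underlying the last paragraph: verifying that after the $C^\infty$ trivialisation the unbounded operators $\Delta_{BC,b}$ vary continuously in the norm-resolvent sense, so that the eigenvalue-counting function $b\mapsto\dim\bigl(\bigoplus_{\lambda<\epsilon}\ker(\Delta_{BC,b}-\lambda\Id)\bigr)$ is locally constant whenever $\epsilon$ avoids the spectrum; this is precisely the Kodaira--Spencer-type semicontinuity theorem for families of elliptic operators, and with it in place the argument above is immediate. (One could instead bypass the analysis by realising $H^{p,q}_{BC}$ as the hypercohomology of Schweitzer's Bott--Chern complex of locally free sheaves on the fibres and invoking Grauert's semicontinuity theorem for proper morphisms, but the elliptic route stays within the framework already set up in this section.)
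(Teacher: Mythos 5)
Your proposal is correct and follows exactly the route the paper intends: the statement is quoted from Schweitzer and Angella, whose proofs (and the paper's own surrounding setup, which introduces the self-adjoint elliptic operator $\Delta_{BC}$, the isomorphism $H^{p,q}_{BC}\cong\ker\Delta_{BC}$, and then invokes Kodaira--Spencer for the follow-up proposition) reduce the claim to the upper semicontinuity of $\dim\ker$ for a smooth family of self-adjoint elliptic operators. Your spectral-projection argument is the standard proof of that Kodaira--Spencer semicontinuity theorem, so there is no gap.
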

\begin{proof}
For a proof, we refer to \cite[Lemma 3.2]{Sch}.
\end{proof}

By applying Kodaira and Spencer's \cite[Theorem 5]{KS3} to the differential operator $\Delta_{BC}$, we obtain the following proposition.

\begin{prop}\label{sub-bu}
Let $f:X\to B$ be a complex analytic family of compact complex manifolds. If, for every $p\,,q\in\mathbb{N}$, the function $ \dim_{\CC}H^{p,q}_{BC}(X_b,\CC)$ is a constant function, then $\cup_{b\in B}H^{p,q}_{BC}(X_b,\CC)$ is a $C^{\infty}$-bundle over $B$.
\end{prop}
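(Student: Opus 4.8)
The plan is to obtain this as a direct application of the Kodaira--Spencer stability theory for differentiable families of self-adjoint elliptic operators, mirroring the classical argument that makes the Dolbeault cohomology into a bundle when $h^{p,q}_{\dbar}$ is constant. Fix $p,q$. Since the assertion is local on $B$, I would first shrink $B$ to a polydisc and, using that $f$ is a proper holomorphic submersion, apply Ehresmann's theorem to fix a $C^\infty$-trivialization $X\cong M\times B$ over $B$, where $M$ is the underlying smooth manifold of one fiber. This identifies every fiber $X_b$ with $M$ carrying a complex structure $J_b$ that depends differentiably on $b\in B$.

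Next I would equip the fibers with metrics: choose any Hermitian metric on the total space $X$ and restrict it fiberwise, producing a $C^\infty$ family $\{g_b\}_{b\in B}$ of Hermitian metrics on $(M,J_b)$. For each $b$ this determines the Bott--Chern Laplacian $\Delta_{BC,b}$ acting on the $(p,q)$-forms of $X_b$; as recalled above it is a self-adjoint elliptic differential operator, and by the preceding proposition there is a canonical isomorphism $H^{p,q}_{BC}(X_b,\CC)\cong\ker\Delta_{BC,b}$. The crucial point to check is that, viewed in the trivialization, $\{\Delta_{BC,b}\}_{b\in B}$ is a \emph{differentiable family of elliptic operators} in the sense of \cite{KS3}: its coefficients are built differentiably out of $J_b$, $g_b$ and finitely many of their derivatives, all of which vary differentiably with $b$, so this is a routine if slightly tedious verification.

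Granting that, the hypothesis that $b\mapsto\dim_{\CC}H^{p,q}_{BC}(X_b,\CC)=\dim_{\CC}\ker\Delta_{BC,b}$ is constant is precisely what is needed to invoke \cite[Theorem 5]{KS3}: the harmonic projections onto $\ker\Delta_{BC,b}$ and the Green operators of $\Delta_{BC,b}$ then depend differentiably on $b$, whence the spaces $\ker\Delta_{BC,b}$ fit together into a finite-rank $C^\infty$-subbundle of the bundle of $(p,q)$-forms along the fibers. Transporting this structure through the fiberwise isomorphism $\ker\Delta_{BC,b}\cong H^{p,q}_{BC}(X_b,\CC)$ endows $\bigcup_{b\in B}H^{p,q}_{BC}(X_b,\CC)$ with the structure of a $C^\infty$ vector bundle over $B$. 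I expect the only genuine obstacle to be the bookkeeping in the previous paragraph — confirming that $\Delta_{BC}$ really does satisfy the hypotheses of \cite[Theorem 5]{KS3} — since once the differentiable-family condition is in hand the conclusion is formal; in particular nothing in \cite[Theorem 5]{KS3} requires second-order operators, so the fourth-order terms in $\Delta_{BC}$ cause no difficulty.
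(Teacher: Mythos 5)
Your proposal is correct and follows essentially the same route as the paper, which gives no separate argument beyond the remark that the proposition follows by "applying \cite[Theorem 5]{KS3} to the differential operator $\Delta_{BC}$" together with the harmonic-theoretic identification $H^{p,q}_{BC}(X_b,\CC)\cong\ker\Delta_{BC,b}$. Your additional bookkeeping (Ehresmann trivialization, fiberwise metrics, checking that $\{\Delta_{BC,b}\}$ is a differentiable family of self-adjoint elliptic operators of fourth order, to which the Kodaira--Spencer stability theorem still applies) is exactly the verification the paper leaves implicit.
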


For a compact complex manifold $X$, we have the following short exact sequence
 \beq\label{sequence-1}
\begin{CD}H^{1,1}_{BC}(X,\CC)@>^\phi>>
H^2_{dR}(X,\CC)@>^u>>H^{2,0}_{\partial}(X,\CC)\oplus H^{0,2}_{\dbar}(X,\CC),
\end{CD}
\eeq
where \beq\nonumber
u:\alpha=\alpha^{2,0}+\alpha^{1,1}+\alpha^{0,2}\mapsto \alpha^{2,0}+\alpha^{0,2}
\eeq
is the projection. Then the following property holds.

\begin{prop}\cite[Lemma 3.3]{Sch}
Let $X$ be a compact complex manifold.  Then
 \beq
 h^2(X)\le 2h^{0,2}_{\dbar}(X)+h^{1,1}_{BC}(X).\nonumber
 \eeq
If $X$ is Kähler, the above inequality becomes an equality.
\end{prop}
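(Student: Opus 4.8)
The plan is to establish the short exact sequence \eqref{sequence-1} and then read off the inequality by a dimension count, with the sharper statement for the K\"ahler case coming from an additional surjectivity. First I would explain why the sequence of maps $H^{1,1}_{BC}(X,\CC)\xrightarrow{\phi}H^2_{dR}(X,\CC)\xrightarrow{u}H^{2,0}_{\partial}(X,\CC)\oplus H^{0,2}_{\dbar}(X,\CC)$ is well defined: a class in $H^{1,1}_{BC}$ is represented by a $d$-closed $(1,1)$-form (since $\ker\partial\cap\ker\dbar\subset\ker d$), so $\phi$ sends it to its de~Rham class; well-definedness of $\phi$ follows because $\im\partial\dbar\subset\im d$. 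For $u$, one checks that if $\alpha$ is $d$-closed then $\alpha^{2,0}$ is $\partial$-closed and $\alpha^{0,2}$ is $\dbar$-closed (looking at the $(3,0)$, $(2,1)$, $(1,2)$, $(0,3)$ components of $d\alpha=0$), and that replacing $\alpha$ by $\alpha+d\beta$ changes $\alpha^{2,0}$ by a $\partial$-exact form and $\alpha^{0,2}$ by a $\dbar$-exact form, so $u$ descends to cohomology.

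Next I would verify exactness at $H^2_{dR}(X,\CC)$, i.e.\ $\ker u=\im\phi$. The inclusion $\im\phi\subset\ker u$ is immediate since a $d$-closed $(1,1)$-form has vanishing $(2,0)$ and $(0,2)$ parts. Conversely, suppose $\alpha=\alpha^{2,0}+\alpha^{1,1}+\alpha^{0,2}$ is $d$-closed with $u[\alpha]=0$, meaning $\alpha^{2,0}=\partial\gamma^{1,0}$ and $\alpha^{0,2}=\dbar\delta^{0,1}$ for some forms $\gamma^{1,0},\delta^{0,1}$. Then $\alpha-d(\gamma^{1,0}+\overline{\gamma^{1,0}})-\dots$ — more precisely, one subtracts $d(\gamma^{1,0}+\delta^{0,1})$ (or suitable real combinations) to kill the $(2,0)$ and $(0,2)$ components; the resulting form $\alpha'$ is cohomologous to $\alpha$, is purely of type $(1,1)$, and is still $d$-closed, hence $\partial$- and $\dbar$-closed, so it defines a class in $H^{1,1}_{BC}$ mapping to $[\alpha]$. (A small care point: after subtracting one must recheck $d$-closedness, but this is automatic since we subtracted an exact form.)

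With exactness in hand, the inequality is a routine rank estimate: from \eqref{sequence-1}, $h^2(X)=\dim\im\phi+\dim\im(u\text{ on }H^2_{dR})$. The first term is at most $h^{1,1}_{BC}(X)$, and since $\im(u)$ lands in $H^{2,0}_{\partial}(X,\CC)\oplus H^{0,2}_{\dbar}(X,\CC)$ the second term is at most $h^{2,0}_{\partial}(X)+h^{0,2}_{\dbar}(X)$; using conjugation symmetry $h^{2,0}_{\partial}(X)=h^{0,2}_{\dbar}(X)$ (complex conjugation swaps $\partial$ and $\dbar$ and identifies the two complexes), this gives $h^2(X)\le h^{1,1}_{BC}(X)+2h^{0,2}_{\dbar}(X)$. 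For the final sentence, when $X$ is K\"ahler one invokes the Hodge decomposition and the $\partial\dbar$-lemma: Bott-Chern cohomology then agrees with Dolbeault cohomology in each bidegree, $u$ becomes surjective onto $H^{2,0}\oplus H^{0,2}$, and $\phi$ is injective, so both inequalities above are equalities and $h^2(X)=2h^{0,2}_{\dbar}(X)+h^{1,1}_{BC}(X)$. I would cite \cite{Sch} or \cite{LUV} for the K\"ahler case rather than redevelop the $\partial\dbar$-lemma. The only genuine obstacle is the exactness verification at the middle term — specifically, arranging the real/imaginary bookkeeping when subtracting an exact form to simultaneously clear the $(2,0)$ and $(0,2)$ parts — but this is standard linear algebra of the Dolbeault bicomplex, not a deep point.
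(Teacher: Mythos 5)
Your argument is correct and is exactly the route the paper intends: it uses the sequence \eqref{sequence-1} displayed immediately before the proposition (exactness at $H^2_{dR}(X,\CC)$ plus the conjugation symmetry $h^{2,0}_{\partial}(X)=h^{0,2}_{\dbar}(X)$ for the inequality, and the $\partial\dbar$-lemma for equality in the K\"ahler case), which is what the cited references \cite{Sch}, \cite{LUV} carry out since the paper itself gives no proof. No gaps; your middle-term exactness verification by subtracting $d(\gamma^{1,0}+\delta^{0,1})$ is the standard and correct bookkeeping.
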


\begin{theo}\label{family-1}
Let $f:X\to B$ be a smooth family of compact complex manifolds. Suppose that the fibers $X_b$ are Kähler for all $b\in B$ with $b\neq b_0$. If $h^2(X_{b_0})=2h^{0,2}_{\dbar}(X_{b_0})+h^{1,1}_{BC}(X_{b_0})$, then $\cup_{b\in  B}H^{1,1}_{BC}(X_b,\CC)$ is a $C^{\infty}$-subbundle of $R^2f_*\CC$ over $B$.
\end{theo}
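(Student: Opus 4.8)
The plan is: first show that $b\mapsto h^{1,1}_{BC}(X_b)$ is constant on $B$; then obtain from Proposition~\ref{sub-bu} the $C^\infty$-bundle $\mathcal H:=\bigcup_{b\in B}H^{1,1}_{BC}(X_b,\CC)$; and finally use the hypothesis to see that the comparison maps $\phi_b$ of \eqref{sequence-1} are injective, so that $\mathcal H$ embeds in $R^2f_*\CC$ as a $C^\infty$-subbundle. Since being a $C^\infty$-subbundle is a local condition on $B$, I will work near an arbitrary point $b_\ast\in B$, and may assume $B$ connected.

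For the constancy of $h^{1,1}_{BC}$, the point is that \emph{every} fibre $X_b$ satisfies the equality $h^2(X_b)=2h^{0,2}_{\dbar}(X_b)+h^{1,1}_{BC}(X_b)$: if $b\neq b_0$ this holds because $X_b$ is K\"ahler (the equality case of the inequality recalled above), and for $b=b_0$ it is the hypothesis. As $f$ is a proper holomorphic submersion, $R^2f_*\CC$ is a local system, so $h^2(X_b)$ is locally constant; hence $h^{1,1}_{BC}(X_b)=h^2(X_{b_\ast})-2h^{0,2}_{\dbar}(X_b)$ for $b$ near $b_\ast$. Now $h^{0,2}_{\dbar}(X_b)=\dim_\CC H^2(X_b,\sO_{X_b})$ is upper semi-continuous in $b$ by Grauert's theorem, so the displayed identity gives $h^{1,1}_{BC}(X_b)\ge h^{1,1}_{BC}(X_{b_\ast})$ near $b_\ast$; on the other hand $b\mapsto h^{1,1}_{BC}(X_b)$ is upper semi-continuous, which gives the opposite inequality. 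Hence $h^{1,1}_{BC}$ is locally constant, so constant on $B$.

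Granting this, Proposition~\ref{sub-bu} with $(p,q)=(1,1)$ shows that $\mathcal H$, realised fibrewise as the (finite-dimensional) space of $\Delta_{BC}$-harmonic $(1,1)$-forms, is a $C^\infty$ vector bundle on $B$. To check that each $\phi_b$ is injective, I will use that $\operatorname{im}\phi_b=\ker u_b$ in \eqref{sequence-1}, whence $h^2(X_b)=\operatorname{rank}\phi_b+\operatorname{rank}u_b$; since $\operatorname{rank}\phi_b\le h^{1,1}_{BC}(X_b)$ and $\operatorname{rank}u_b\le\dim_\CC H^{2,0}_{\partial}(X_b)+\dim_\CC H^{0,2}_{\dbar}(X_b)=2h^{0,2}_{\dbar}(X_b)$ (the last equality because complex conjugation gives $\dim_\CC H^{2,0}_{\partial}(X_b)=\dim_\CC H^{0,2}_{\dbar}(X_b)$), the equality of the previous paragraph forces $\operatorname{rank}\phi_b=h^{1,1}_{BC}(X_b)$, i.e.\ $\phi_b$ is injective for every $b\in B$.

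Finally, a $\Delta_{BC}$-harmonic $(1,1)$-form is $\partial$- and $\dbar$-closed, hence $d$-closed, so assigning to it its de~Rham class defines a morphism $\Phi\colon\mathcal H\to R^2f_*\CC$ of $C^\infty$ vector bundles; here I fix an auxiliary $C^\infty$ family of Hermitian metrics on the fibres and use Kodaira--Spencer's theorem again to identify the $C^\infty$-bundle $R^2f_*\CC$ with the bundle of $\Delta_d$-harmonic $2$-forms (of constant rank $h^2(X_b)$), the de~Rham class map being then $C^\infty$. By the previous step $\Phi$ is injective on every fibre, hence of constant rank $\operatorname{rank}\mathcal H$, so $\Phi(\mathcal H)=\bigcup_{b\in B}\phi_b\!\left(H^{1,1}_{BC}(X_b,\CC)\right)$ is a $C^\infty$-subbundle of $R^2f_*\CC$ isomorphic to $\mathcal H$, which is the assertion. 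I expect the main difficulty to be the constancy of $h^{1,1}_{BC}$: this is where the K\"ahler hypothesis away from $b_0$, the equality at $b_0$, and the two upper-semicontinuity statements (which constrain $h^{1,1}_{BC}$ from opposite sides) all have to be used at once. The final paragraph is routine bookkeeping with bundle maps over a base; the only delicate point there is the $C^\infty$-dependence of $\phi_b$ on $b$, which is why I pass through harmonic representatives instead of manipulating cohomology classes abstractly.
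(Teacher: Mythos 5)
Your proposal is correct and follows essentially the same route as the paper: constancy of $h^{1,1}_{BC}$ is forced by combining the equality $h^2=2h^{0,2}_{\dbar}+h^{1,1}_{BC}$ on every fibre with the two upper semi-continuities, Proposition~\ref{sub-bu} then gives the $C^\infty$-bundle, and injectivity of $\phi_b$ in \eqref{sequence-1} identifies it with a subbundle of $R^2f_*\CC$. Your version is slightly more careful in two harmless ways (working locally at an arbitrary $b_\ast$ rather than only comparing to $b_0$, and spelling out the rank count $h^2=\operatorname{rank}\phi_b+\operatorname{rank}u_b$ and the smoothness of the comparison map via harmonic representatives), but the substance is the same.
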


\begin{proof} By the upper semi-continuity of Bott--Chern cohomology and Dolbeault cohomology, we have the following inequalities:
\begin{eqnarray*}
h^{0,2}_{\dbar}(X_b)&\le& h^{0,2}_{\dbar}(X_{b_0}),\\
  h^{1,1}_{BC}(X_b)&\le& h^{1,1}_{BC}(X_{b_0}).
\end{eqnarray*}

Thus for $b\neq b_0$,
\begin{eqnarray*}h^2(X_{b})&=&h^{0,2}_{\dbar}(X_b)+h^{2,0}_{\dbar}(X_b)+h^{1,1}_{BC}(X_b)\\
&=&2h^{0,2}_{\dbar}(X_b)+h^{1,1}_{BC}(X_b)\\
&\le& 2h^{0,2}_{\dbar}(X_{b_0})+h^{1,1}_{BC}(X_{b_0})\\
&=&h^2(X_{b_0})\\
&=&h^2(X_{b}).
\end{eqnarray*}
Thus,
\beq\nonumber
h^{1,1}_{BC}(X_b)= h^{1,1}_{BC}(X_{b_0}).
\eeq
It follows that $\cup_{b\in  B}H^{1,1}_{BC}(X_b,\CC)$ forms a $C^{\infty}$-bundle.

Since $h^2(X_b)=2h^{0,2}_{\dbar}(X_b)+h^{1,1}_{BC}(X_b)$ for all $b\in B$, the morphism $\phi$ in the exact sequence \eqref{sequence-1} is injective for every $X_b$. Therefore,  $\cup_{b\in B}H^{1,1}_{BC}(X_b,\CC)$ is a $C^{\infty}$-subbundle of $R^2f_*\CC$ over $B$.
\end{proof}

\subsection{Construction of Positive Current}
To construct a positive current on the central fiber, we first introduce some notation regarding the Barlet cycle space of compact complex manifolds.

\begin{defi} Let $f:X\to B$ be a smooth proper morphism. A smooth $B$-relative $(1,1)$-form $\omega_{X/B}$ is called a $B$-relative Kähler form if its restriction to each fiber $X_b$ is a Kähler form for all $b\in B$.
\end{defi}

Suppose that the fiber $X_b$ is Kähler. Then by \cite[Section 6]{KS3}, there exists an open neighborhood $U_b\subset B$ containing $b$, such that the restriction family $f^{-1}(U_b)\to U_b$ admits a $U_b$-relative Kähler form $\omega_{f^{-1}(U_b)/U_b}$.

Let $n=\dim X_b$ denote the dimension of the fiber $X_b$ for $b\in B$. For $0\le k\le n$, let $C^k(X/B)$ be the $B$-relative Barlet space, which parametrizes the $k$-dimensional cycles in $X$ over $B$. There exists a canonical morphism $$\mu:C(X/B):=\cup_kC^k(X/B)\to B.$$
For a base change $T\to B$, it follows that $$C(X\times_{B}T/T)=C(X/B)\times_{B}T.$$

We believe that the following properness property of the morphism $\mu$ is well-known to experts. However, since we could not find a suitable reference for this result, we state it here along with its proof.

\begin{prop}\label{prop-0}
Let $f:X\to B$ be  a smooth proper morphism with a $B$-relative Kähler form $\omega_{X/B}$. Suppose $A$ is a connected component of $C(X/B):=\cup_kC^k(X/B)$. Then the restricted morphism $\mu|_A:A\to B$ is proper.
\end{prop}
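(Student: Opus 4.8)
The plan is to reduce the properness of $\mu|_A:A\to B$ to the classical Bishop compactness theorem for cycle spaces of K\"ahler manifolds, using the valuative criterion of properness together with the fact that each connected component $A$ of $C(X/B)$ carries cycles of uniformly bounded volume. First I would recall that, since $A$ is connected and the relative K\"ahler class $[\omega_{X/B}]$ restricts to a locally constant section of $R^{2}f_*\CC$ (the volume $\int_{Z}\omega_{X/B}^{k}$ of a $k$-cycle $Z$ in a fiber $X_b$ depends only on the cohomology class of $Z$, which is locally constant in families, and hence constant on the connected set $A$), there is a single constant $V$ bounding the volume of every cycle parametrized by $A$; moreover the dimension $k$ is constant on $A$. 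This is the key geometric input that makes Bishop's theorem applicable.

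Next I would verify properness by the valuative criterion: let $\Delta^\star$ be a punctured disk with a map $g:\Delta^\star\to A$ such that the composition $\mu\circ g:\Delta^\star\to B$ extends to $\bar g:\Delta\to B$; I must produce a (unique) extension of $g$ over the puncture. Pull back the family along $\bar g$ to get $X_\Delta:=X\times_B\Delta\to\Delta$, which is again smooth proper with a relative K\"ahler form, and $g$ gives a family of $k$-cycles $\{Z_t\}_{t\in\Delta^\star}$ in the fibers, all of volume $\le V$. By Bishop's theorem \cite{Bishop}, after possibly passing to a subsequence $t_i\to 0$ the cycles $Z_{t_i}$ converge to a $k$-cycle $Z_0$ in the central fiber $X_0$; this gives a point of $C(X_\Delta/\Delta)$ over $0$, hence (by the base-change property $C(X\times_B\Delta/\Delta)=C(X/B)\times_B\Delta$) a point $a_0\in A$ with $\mu(a_0)=\bar g(0)$. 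Because $C(X/B)$ is a complex space and the map $\Delta^\star\to A$ is holomorphic with a relatively compact image (its closure lies in the compact set of cycles of volume $\le V$ over $\bar g(\Delta)$), the extension to $\Delta$ is automatically holomorphic and unique by the Riemann extension property for maps to complex spaces, and continuity of $\mu$ forces $\mu(a_0)=\bar g(0)$. Finally, properness of $\mu|_A$ is equivalent to $\mu|_A$ being universally closed; since everything here is compatible with base change $T\to B$ (again by $C(X\times_B T/T)=C(X/B)\times_B T$ and the fact that the volume bound is inherited), checking the valuative criterion after arbitrary base change reduces to the disk case just treated, and one concludes $\mu|_A$ is proper.

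I expect the main obstacle to be making rigorous the step that the volume of cycles is uniformly bounded on the connected component $A$: one needs that the function $a\mapsto\int_{Z_a}\omega_{X/B}^{\dim Z_a}$, where $Z_a$ is the cycle parametrized by $a$, is \emph{locally constant} on $C(X/B)$. This requires knowing (i) that the dimension of the cycle is locally constant (true because $C(X/B)=\coprod_k C^k(X/B)$ is a disjoint union over $k$), and (ii) that within a fixed $C^k$ the homology class of the cycle, as a section of $R^{2n-2k}f_*\ZZ$ or its image in de~Rham cohomology, varies continuously and hence, being integral-valued in a suitable sense, is locally constant — combined with the fact that $\int_{Z}\omega_{X/B}^{k}$ depends only on that class and on the (flatly varying) relative K\"ahler class. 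A secondary technical point is that the relative K\"ahler form $\omega_{X/B}$ from \cite[Section 6]{KS3} is only guaranteed locally on $B$, so strictly speaking one should either assume $B$ small enough or patch: since properness is local on the target $B$, it suffices to prove it over each such neighbourhood, so this causes no real difficulty. Everything else — the valuative criterion, Bishop's compactness, the Riemann extension property — is standard once the volume bound is in hand.
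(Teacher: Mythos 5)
Your overall strategy is the same as the paper's: introduce the volume function $Q(Z)=\int_Z\omega_{X/B}^k$, show it is uniformly bounded on $A$ over compact subsets of $B$, and invoke Bishop's theorem. But the way you establish the volume bound contains a genuine error. You assert that the relative K\"ahler class $[\omega_{X/B}|_{X_b}]$ is a \emph{locally constant} section of $R^2f_*\CC$, and deduce that $Q$ is constant on the connected component $A$. A $B$-relative K\"ahler form is only required to restrict to a K\"ahler (hence closed) form on each fiber; it need not be $d$-closed on the total space $X$, so its fiberwise cohomology class is merely a smooth section of $R^2f_*\RR$, not a flat one. Hence $Q(a)=\langle [Z_a],[\omega_{X/B}^k|_{X_{\mu(a)}}]\rangle$ pairs a flat section (the class of the cycle) with a non-flat one, and is not constant on $A$ in general. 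What is actually true, and what the paper (following Barlet) uses, is that $Q$ is \emph{continuous} on $C^k(X/B)$ and \emph{locally constant on the fibers of $\mu$} --- two cycles lying in the same fiber $X_b$ and connected inside the cycle space are homologous in $X_b$, where $\omega_{X/B}^k$ \emph{is} closed --- and these two facts together give the required bound over a compact $K\subset B$. Alternatively, since properness is local on $B$, you could first replace $\omega_{X/B}$ over a small ball by a relative K\"ahler form whose fiberwise class is flat (this is essentially Kodaira--Spencer stability); but that substitution has to be made explicitly, and you did not make it.

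A secondary point: the detour through the valuative criterion and universal closedness is scheme-theoretic language that does not buy you anything here and slightly misses the mark. For morphisms of complex spaces, properness means that preimages of compact sets are compact, and Bishop's theorem delivers exactly this once you know (i) all cycles parametrized by $A\cap\mu^{-1}(K)$ lie in the fixed compact set $f^{-1}(K)$ and (ii) their volumes are uniformly bounded: the set of such cycles is then compact in the cycle space, and $A\cap\mu^{-1}(K)$ is a closed subset of it. Your subsequence-extraction over a punctured disk only produces a single limit point and, on its own, does not show that the preimage of a compact set is compact. So the correct repair is to fix the volume bound as above and then apply Bishop's compactness directly, as the paper does.
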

\begin{proof} The proof follows a similar approach to \cite[Proposition 3.0.6]{Ba1}, which addresses the case of relative cycle spaces $C^{n-1}(X/B)$.
Since $\omega_{X/B}$ is a  $B$-relative Kähler form on $X$, the form $\omega_{X/B}^k$ is closed when restricted to the fiber $X_b$, for $1\le k\le n$. For a $k$-dimensional relative cycle $Z$, define the quantity $Q(Z):=\int_Z\omega^k_{X/B}$. This is a continuous function on $Z$. On any given compact set $K\subset B$, the function $Q$ is bounded. Since $Q$ is locally constant on the fibers of the projection $C^k(X/B)\to B$, for a connected component $A$ of $C(X/B):=\cup_kC^k(X/B)$, the restricted morphism $\mu|_A:A\to B$ is proper by Bishop's theorem \cite[Theorem 1]{Bi}.
\end{proof}

\begin{prop}\label{prop-1}
Let $f:X\to B$ be a smooth proper morphism with fibers $X_b$ being Kähler for all $b\in B$. Suppose $A$ is a connected component of $C(X/B):=\cup_kC^k(X/B)$. Then the restricted morphism $\mu|_A:A\to B$ is proper.
\end{prop}
\begin{proof} Since the fibers $X_b$ are Kähler for all $b\in B$, there exists a small neighborhood $U\subset B$ of any $b\in B$ such that the restricted family $f_b:f^{-1}(U)\to U$ admits an $U$-relative Kähler form. Consequently, the induced morphism
\beq
\mu_b: C(f^{-1}(U)/U)\to U\nonumber
\eeq
is proper on when restricts on an arbitrary component. It is straightforward to verify that $C(f^{-1}(U)/U)=C(X/B)\times_BU$, and $\mu_b$ is the restriction of $\mu$ to $C(f^{-1}(U)/U)$. Therefore, for a connected component $A$ of $C(X/B)$, the map $A\times_BU\to U$ is proper. Thus the restricted morphism $\mu|_A:A\to B$ is proper.
\end{proof}

If all the fibers of the family $f:X\to B$ are Kähler manifolds, then the bundles
$$\cup_{b\in B}H^{1,1}_{BC}(X_b,\CC), \quad \cup_{b\in B}H^{0,2}_{\dbar}(X_b,\CC), \quad \cup_{b\in B}H^{2,0}_{\dbar}(X_b,\CC)$$
are all $C^{\infty}$-subbundles of $R^2f_*\CC$. Denote these subbundles by
$$
\cF_{B}^{p,q}, \text{ with } p+q=2, p\ge 0, q\ge 0.
$$
This gives the following $C^{\infty}$-decomposition:
\beq
R^2f_*\CC=\cF^{2,0}_B\oplus\cF^{1,1}_B\oplus\cF^{0,2}_B.\nonumber
\eeq
The Gauss--Manin connection $\nabla$ on $R^2f_*\CC$ can then be expressed as
\beq
\nabla=\left[ {\begin{array}{ccc}
    \nabla^{2,0} & *&* \\
    *& \nabla^{1,1}& * \\
    *&*&\nabla^{0,2}
  \end{array} } \right],\nonumber
\eeq
where $\nabla^{p,q}$ are the induced connections on the subbundles $\cF^{p,q}_B$.

Using the properness of connected component of the Barlet cycle space, Demailly and Păun \cite{Dem2} proved the following celebrated theorem on the Kähler cone.
\begin{theo}\cite[Theorem 0.9]{Dem2}\label{Kahler}
Let $f:X\to S$ be a deformation of compact Kähler manifolds over an irreducible base $S$. Then there exists a countable union $S'=\cup S_\nu$ of analytic subsets $S_\nu\subset S$, such that the Kähler cones $\cK_t\subset H^{1,1}_{BC}(X_t,\CC)$ are invariant over $S\setminus S'$ under parallel transport with respect to the $(1,1)$-projection $\nabla^{1,1}$ of the Gauss--Manin connection.
\end{theo}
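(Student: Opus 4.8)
The final statement is \cite[Theorem~0.9]{Dem2}, so the plan is to reconstruct Demailly--P\u{a}un's argument; its one deep ingredient is their numerical characterization of the K\"ahler cone: on a compact K\"ahler manifold $X$ of dimension $n$, the cone $\mathcal{K}(X)\subset H^{1,1}_{BC}(X,\RR)$ is one of the connected components of the \emph{open} cone
\[
\mathcal{P}(X):=\bigl\{\alpha\in H^{1,1}_{BC}(X,\RR):\ \textstyle\int_Y\alpha^{\dim Y}>0\ \text{for every irreducible analytic }Y\subseteq X\bigr\}.
\]
Granting this, it suffices to show that $s\mapsto\mathcal{P}(X_s)$, and hence $s\mapsto\mathcal{K}(X_s)$, is stable under $\nabla^{1,1}$-parallel transport away from a countable union of proper analytic subsets of $S$. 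As every fibre of $f$ is K\"ahler, the apparatus of Section~3 applies: the $C^\infty$-splitting $R^2f_*\CC=\mathcal{F}^{2,0}_S\oplus\mathcal{F}^{1,1}_S\oplus\mathcal{F}^{0,2}_S$ with $\mathcal{F}^{1,1}_S=\bigcup_s H^{1,1}_{BC}(X_s,\CC)$, and the Gauss--Manin connection $\nabla$ with its $(1,1)$-block $\nabla^{1,1}$.

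First I would build the exceptional locus from the relative Barlet cycle space $\mu\colon C(X/S)=\bigcup_k C^k(X/S)\to S$, proper by Proposition~\ref{prop-1}. As a complex space $C(X/S)$ is locally connected and second countable, hence has at most countably many connected components $A_\nu$; each $A_\nu$ lies in one $C^{k(\nu)}(X/S)$ and is proper over $S$, so by Remmert's theorem $\mu(A_\nu)\subseteq S$ is analytic, and since $S$ is irreducible it is either all of $S$ or a proper analytic subset. Let $S'$ be the union of the proper ones, enlarged for each surjective $\nu$ by a further proper analytic subset off which $\mu|_{A_\nu}$ is topologically locally trivial. Then $S'$ is a countable union of proper analytic subsets, and $S\setminus S'$ is path-connected.

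The heart of the argument is then as follows. Fix $t_0,t\in S\setminus S'$, a path $\gamma\colon[0,1]\to S\setminus S'$ from $t_0$ to $t$, a class $\alpha_0\in\mathcal{P}(X_{t_0})$, and let $\alpha(r)$ be its $\nabla^{1,1}$-parallel transport. Given an irreducible analytic $Y\subset X_t$ of dimension $p$: it is a point of some $A_\nu$ with $\mu(A_\nu)=S$ (otherwise $t\in\mu(A_\nu)\subseteq S'$), so lifting $\gamma$ to $A_\nu$ through that point yields a continuous family of effective $p$-cycles $Z_r\subset X_{\gamma(r)}$ with $Z_1=Y$, whose integral fundamental classes $\eta(r)=[Z_r]\in H^{2(n-p)}(X_{\gamma(r)},\CC)$, of Hodge type $(n-p,n-p)$, form a $\nabla$-flat section along the lifted path (a locally constant section of $R^{2(n-p)}f_*\ZZ$). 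Then $r\mapsto\bigl\langle\alpha(r)^p\cup\eta(r),[X_{\gamma(r)}]\bigr\rangle$ is constant: its derivative equals $\bigl\langle p\,\alpha(r)^{p-1}\cup\nabla_{\dot\gamma}\alpha(r)\cup\eta(r),[X_{\gamma(r)}]\bigr\rangle$ since $\eta$ and the fundamental-class pairing are $\nabla$-flat, while $\nabla^{1,1}_{\dot\gamma}\alpha(r)=0$ forces $\nabla_{\dot\gamma}\alpha(r)$ to have type $(2,0)+(0,2)$, so the integrand lands in the $(n+1,n-1)+(n-1,n+1)$ part of $H^{2n}(X_{\gamma(r)},\CC)$, which vanishes. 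Hence $\int_Y\alpha(1)^p=\int_{Z_0}\alpha_0^p$; writing $Z_0=\sum_i m_iZ_{0,i}$ with $m_i>0$, $Z_{0,i}\subset X_{t_0}$ irreducible of dimension $p$ and $Z_0\neq 0$ (its class is the transport of $[Y]\neq 0$), this equals $\sum_i m_i\int_{Z_{0,i}}\alpha_0^p>0$ because $\alpha_0\in\mathcal{P}(X_{t_0})$. Letting $Y$ vary, $\alpha(1)\in\mathcal{P}(X_t)$; the same computation with $\gamma$ reversed gives the opposite inclusion, so $\nabla^{1,1}$-parallel transport identifies $\mathcal{P}(X_{t_0})$ with $\mathcal{P}(X_t)$.

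It remains to upgrade this from $\mathcal{P}$ to $\mathcal{K}$: along every path in $S\setminus S'$ the family of open cones $s\mapsto\mathcal{P}(X_s)$ is now flat, and by the numerical criterion $\mathcal{K}(X_s)$ is one of its connected components; since K\"ahlerness is an open condition in a family, a standard clopen argument (the K\"ahler locus is open, closed and nonempty in the trivialized $\mathcal{P}$-bundle over the connected set $S\setminus S'$) forces $\mathcal{K}(X_s)$ to be the $\nabla^{1,1}$-parallel transport of $\mathcal{K}(X_{t_0})$ for all $s\in S\setminus S'$, which is the asserted invariance. The step I expect to be the main obstacle is the flatness of the classes $\eta(r)$ --- that a connected component of the relative cycle space carries the fundamental classes of its cycles into a single flat section of $R^{\bullet}f_*\ZZ$; this is exactly where the properness in Proposition~\ref{prop-1} is indispensable, both to make $\mu(A_\nu)$ analytic (so the ``bad'' components can be absorbed into $S'$) and to secure the needed path-lifting. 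By contrast the Hodge-type bookkeeping in the third paragraph, though it is the conceptual crux of passing from the numerical criterion to Theorem~\ref{Kahler}, is short.
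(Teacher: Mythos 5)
This statement is quoted verbatim from Demailly--P\u{a}un and the paper offers no proof of it, so there is nothing internal to compare against; measured against the original source, your reconstruction is essentially the standard argument (numerical characterization of the K\"ahler cone, properness of the components of the relative Barlet space to define $S'$, flatness of the cycle classes plus the Hodge-type vanishing of $\alpha^{p-1}\cup\nabla\alpha\cup\eta$ to propagate positivity, then a clopen argument to pass from $\mathcal{P}$ to $\mathcal{K}$) and is correct. The one point you rightly flag as delicate --- that the cycle classes of a connected component of $C^p(X/S)$ assemble into a flat section --- follows because the cycle class map is continuous into the local system $R^{2(n-p)}f_*\ZZ$, whose stalks are discrete, so a continuous section over a connected set is automatically locally constant.
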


This theorem can be used to produce a closed positive current on the central fiber.

\begin{lemm}\label{main0}
Let $f:X \to \Delta$ be a smooth family of compact complex manifolds, with $X_t$ being Kähler when $t\neq0$ and $\dim X_t=n$. Assume that $h^2(X_0)= 2h^{0,2}_{\dbar}(X_0)+h^{1,1}_{BC}(X_0)$. Then there exists a closed positive $(1,1)$-current $T$ on $X_0$ with $\int_{X_0}T_{ac}^n>0$.
\end{lemm}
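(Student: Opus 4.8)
The plan is to produce, by parallel transporting a K\"ahler class from the nearby K\"ahler fibres along a well-chosen path to $X_0$, a Bott--Chern class $\alpha_0\in H^{1,1}_{BC}(X_0,\CC)$ which is a limit of K\"ahler classes and satisfies $\int_{X_0}\alpha_0^{\,n}>0$, and then to realise such an $\alpha_0$ by a closed positive current.

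First I would apply Theorem \ref{family-1} with $B=\Delta$ and $b_0=0$: the hypothesis $h^2(X_0)=2h^{0,2}_{\dbar}(X_0)+h^{1,1}_{BC}(X_0)$ ensures that $\cF^{1,1}:=\cup_{t\in\Delta}H^{1,1}_{BC}(X_t,\CC)$ is a $C^{\infty}$-subbundle of $R^2f_*\CC$ over the whole disc, and (as in the proof of that theorem) that the map $\phi_t\colon H^{1,1}_{BC}(X_t,\CC)\to H^{2}_{dR}(X_t,\CC)$ is injective for every $t\in\Delta$, including $t=0$; hence the $(1,1)$-projection $\nabla^{1,1}$ of the Gauss--Manin connection is a genuine connection on $\cF^{1,1}$ over all of $\Delta$, so that parallel transport along a path ending at $0$ makes sense. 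Next I would invoke the Demailly--P\u{a}un Theorem \ref{Kahler} for the restricted family $f\colon f^{-1}(\Delta^\star)\to\Delta^\star$, whose base is irreducible: it yields a countable union $S'=\cup S_\nu$ of proper analytic subsets of $\Delta^\star$, which are discrete as $\dim\Delta^\star=1$, so $S'$ is a countable set and the K\"ahler cones $\cK_t\subset H^{1,1}_{BC}(X_t,\CC)$ are invariant under $\nabla^{1,1}$-parallel transport over $\Delta^\star\setminus S'$. Only countably many radial directions contain a point of $S'$, so I can choose $t_1\in\Delta^\star$ whose radial segment $\sigma=\{s t_1:0<s\le 1\}$ avoids $S'$; fixing a K\"ahler form $\omega_1$ on $X_{t_1}$ with $\int_{X_{t_1}}\omega_1^{\,n}=1$ and letting $\alpha_s\in H^{1,1}_{BC}(X_{st_1},\CC)$ be the $\nabla^{1,1}$-transport of $[\omega_1]$ along $\sigma$, Theorem \ref{Kahler} makes $\alpha_s$ a K\"ahler class on $X_{st_1}$ for each $s\in(0,1]$, and the extension over $s=0$ produces the class $\alpha_0\in H^{1,1}_{BC}(X_0,\CC)$.

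The key point is that $\int_{X_0}\alpha_0^{\,n}>0$. Using $\phi_t$ to view $\alpha_s$ in $H^2_{dR}(X_{st_1},\CC)$ and the $C^\infty$-splitting $R^2f_*\CC=\cF^{2,0}\oplus\cF^{1,1}\oplus\cF^{0,2}$ over $\Delta^\star$, the fact that $\alpha_s$ is $\nabla^{1,1}$-parallel along $\sigma$ forces $\nabla_{\dot\gamma}\alpha_s$ to lie in $\cF^{2,0}_{st_1}\oplus\cF^{0,2}_{st_1}$. Since the fibre fundamental classes are flat for the Gauss--Manin connection $\nabla$ and the $\cup$-product is $\nabla$-horizontal, $\tfrac{d}{ds}\int_{X_{st_1}}\alpha_s^{\,n}=n\int_{X_{st_1}}\alpha_s^{\,n-1}\cup\nabla_{\dot\gamma}\alpha_s$; for $s\in(0,1]$ the fibre is K\"ahler, so $\alpha_s^{\,n-1}\cup\nabla_{\dot\gamma}\alpha_s$ is a sum of classes of Hodge types $(n{+}1,n{-}1)$ and $(n{-}1,n{+}1)$, both of which vanish on an $n$-dimensional manifold. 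Hence $s\mapsto\int_{X_{st_1}}\alpha_s^{\,n}$ is constant equal to $\int_{X_{t_1}}\omega_1^{\,n}=1$ on $(0,1]$, and by continuity of the parallel section and of the cup product, $\int_{X_0}\alpha_0^{\,n}=1>0$.

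It remains to realise $\alpha_0$ by a closed positive $(1,1)$-current $T$ on $X_0$ with $\int_{X_0}T_{ac}^{\,n}>0$. Here $\alpha_0$ is a limit of the K\"ahler classes $\alpha_s$ on the fibres $X_{st_1}$ which degenerate smoothly to $X_0$; choosing K\"ahler forms $\omega_s\in\alpha_s$, transporting them to $X_0$ by a $C^\infty$-trivialisation of the family near $X_0$, and bounding their masses by pairing with a Gauduchon metric (via the Aeppli pairing of Theorem \ref{pair}, the classes $\alpha_s$ being bounded), one extracts a weak limit $T$ along a sequence $s_k\to 0$, which is a $d$-closed positive $(1,1)$-current on $X_0$ in the class $\alpha_0$. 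The remaining, and I expect most delicate, step is to show $\int_{X_0}T_{ac}^{\,n}>0$ — equivalently that the mass $\int_{X_{s_k}}\omega_{s_k}^{\,n}=1$ does not escape entirely into the singular part of the limit. This is exactly the situation controlled by Demailly's regularisation theory together with the positivity $\int_{X_0}\alpha_0^{\,n}>0$ and the nef-ness of $\alpha_0$, along the lines of \cite{Dem2}, which in fact provides a K\"ahler current in the class $\alpha_0$, a fortiori a closed positive current with positive a.c.\ volume.
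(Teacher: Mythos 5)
Your construction of the candidate current follows the paper's own route almost exactly: extend $\cF^{1,1}$ across $0$ via Theorem \ref{family-1}, choose a path in $\Delta^\star$ avoiding the countable bad set of Theorem \ref{Kahler} (the paper realises this set as the images of the non-dominating components of the relative Barlet space, which is why Propositions \ref{prop-0} and \ref{prop-1} are proved), parallel-transport a K\"ahler class, show $\int_{X_{\gamma(u)}}\alpha(u)^n$ is constant because $\nabla\alpha(u)$ has type $(2,0)+(0,2)$, bound the masses against a Gauduchon family via Theorem \ref{pair}, and extract a weak limit $T$. All of that is fine and matches the paper.

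The genuine gap is in your last step. The paper closes the argument with Boucksom's almost-everywhere semicontinuity of the absolutely continuous part under weak convergence \cite[Proposition 2.1]{Bou}: for a.e.\ $x\in X_0$ one has $T^n_{ac}(x)\ge\limsup_k\omega_{\gamma(u_k)}^n(x)$, whence $\int_{X_0}T^n_{ac}\ge c>0$ directly, with no further input. You instead propose to deduce the positivity of $\int_{X_0}T_{ac}^n$ from ``Demailly's regularisation theory together with $\int_{X_0}\alpha_0^n>0$ and the nef-ness of $\alpha_0$,'' claiming this even produces a K\"ahler current in $\alpha_0$. That appeal is circular: the Demailly--P\u{a}un criterion ``nef with $\int_X\alpha^n>0$ implies $\alpha$ contains a K\"ahler current'' is proved for compact \emph{K\"ahler} manifolds (and its extensions require $X$ to be in class $\cC$), which is exactly what is not yet known about $X_0$ --- indeed, by Theorem \ref{equiv1}, producing a K\"ahler current on $X_0$ is equivalent to $X_0\in\cC$, essentially the conclusion being sought. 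Moreover you have not actually established that $\alpha_0$ is nef on $X_0$ in any sense usable by that machinery; it is only a limit of K\"ahler classes on the \emph{neighbouring} fibres. So as written the final step does not go through; the missing ingredient is precisely the semicontinuity statement $T^n_{ac}\ge\limsup\omega_{\gamma(u_k)}^n$ a.e.\ from \cite{Bou}, which converts the constancy $\int\omega_{\gamma(u_k)}^n=c>0$ into $\int_{X_0}T^n_{ac}>0$ without any K\"ahlerness assumption on $X_0$.
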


\begin{proof}
 Let $\Delta^\star=\Delta\setminus \{0\}$, and consider the restriction $f':X'\triangleq f^{-1}(\Delta^\star)\to \Delta^\star$. The morphism $f'$ is smooth and all fibers are Kähler manifolds.

By Proposition~\ref{prop-1}, the morphism $\mu:C(X'/\Delta^\star):=\cup_kC^k(X'/\Delta^\star)\to \Delta^\star$ is proper for $0\le k\le n$. Denote by $\Sigma^{\prime\prime}$ the union of the image in $\Delta^\star$ of those components of $C(X'/\Delta^\star)$ that do not project onto $\Delta^\star$.  By \cite[Theorem]{F0} there are at most countably many components of $C(X'/\Delta^\star)$.  Since $\mu$ is proper, $\Sigma^{\prime\prime}$ is an union of countably many proper analytic closed subsets of $\Delta^\star$.

Let $\Sigma':=\Delta^\star\setminus \Sigma^{\prime\prime}$, and let $\Sigma:=\Sigma'\cup \{0\}=\Delta\setminus \Sigma^{\prime\prime}$. It follows that $\Sigma$ is arcwise connected by piecewise smooth analytic arcs. For $t_0\in \Sigma'$, consider a smooth arc
\beq\nonumber
\gamma: [0,1]\to \Sigma,\quad\quad u\mapsto \gamma(u),
\eeq
satisfying $\gamma(0)=0$ and $\gamma(1)=t_0$.

Since $f:X \to \Delta$ is a smooth family of compact complex manifolds, the sheaf $R^2f_*\CC$ is locally constant on $\Delta$. There exists a Gauss--Manin connection $\nabla$ on $R^2f_*\CC$. By the assumption $h^2(X_0)= 2h^{0,2}_{\dbar}(X_0)+h^{1,1}_{BC}(X_0)$, Theorem~\ref{family-1} implies that $\cF_{\Delta}^{1,1}=\cup_{t\in \Delta}H^{1,1}_{BC}(X_t,\CC)$ is a smooth subbundle of $R^2f_*\CC$.  Let $\cF_{\Delta}^{2}$ be the smooth orthogonal complement of $\cF_{\Delta}^{1,1}$ in $R^2f_*\CC$, so that $\cF_{\Delta,t}^{2}\cong H^{2,0}_{\dbar}(X_{t},\CC)\oplus H^{0,2}_{\dbar}(X_{t},\CC)$ for $t\in\Delta^\star$. Denote by $\nabla^{1,1}$ the restriction of the Gauss--Manin connection $\nabla$ to $\cF_{\Delta}^{1,1}$.

Let $\alpha(u)\in H^{1,1}_{BC}(X_{\gamma(u)},\CC)$ for $u\in [0,1]$ be a family of real $(1,1)$-cohomology classes obtained by parallel transport under $\nabla^{1,1}$ along $\gamma$. From the argument in the proof of \cite[Theorem 0.9]{Dem2}, we can assume that  $\alpha(u)$ is constant under the parallel transport for $u\in(0,1]$. This implies that $\nabla\alpha(u)\in H^{2,0}_{\dbar}(X_{\gamma(u)},\CC)\oplus H^{0,2}_{\dbar}(X_{\gamma(u)},\CC)$ for $u\in (0,1]$. Consequently,
\beq
\frac{d}{du}\bigg([X_{\gamma(u)}]\cdot\alpha^n(u)\bigg)=n[X_{\gamma(u)}]\cdot \alpha^{n-1}(u)\cdot\nabla\alpha(u)=0,\nonumber
\eeq
and thus
\beq
[X_{\gamma(u)}]\cdot\alpha^n(u)=c\nonumber
\eeq
is constant for $u\in(0,1]$.

Suppose that $\alpha(1)$ is a Kähler class on $X_{\gamma(1)}$ for $t_0=\gamma(1)\in \Sigma'$.
 By Theorem \ref{Kahler}, the Kähler cone $\cK_{\gamma(u)}\subset H^{1,1}_{BC}(X_{\gamma(u)},\CC)\cong H^{1,1}_{\dbar}(X_{\gamma(u)},\CC)$ of the fibers $X_{\gamma(u)}$ are invariant along $\gamma((0,1])$ under the parallel transport with respect to $\nabla^{1,1}$, thus the classes $\alpha(u)$ are Kähler classes for all $u\in (0,1]$.

Let $\omega_{\gamma(u)}$ be a smooth positive $(1,1)$-closed form in the class $\alpha(u)$ for $u\in (0,1]$. Let $g_t$ be a smooth family of Gauduchon metrics after possibly shrinking on $\pi:X\to \Delta$. Then
\beq\nonumber
\int_{X_{\gamma(u)}}\omega_{\gamma(u)}\wedge g_{\gamma(u)}^{n-1}=\alpha(u)\cdot \{g_{\gamma(u)}^{n-1}\},
\eeq
where $\{g_{\gamma(u)}^{n-1}\}\in H^{n-1,n-1}_{A}(X_{\gamma(u)},\CC)$ is the corresponding class.

Since $\dim_{\CC} H_{BC}^{1,1}(X_{\gamma(u)},\CC)$ is independent of $u\in [0,1]$, there is a $d$-closed smooth $(1,1)$-form $\Omega$ on $X|_{\gamma([0,1])}$ such that $[\Omega|_{X_{\gamma(u)}}]=\alpha(u)$ in $H^{1,1}_{BC}(X_{\gamma(u)},\mathbb{C})$ for any $u\in [0,1]$ by the proof of \cite[Theorem 9]{KS3} after replacing the Dolbeault Laplacian by $\Delta_{BC,t}$.
  Now we have that
 \beq
 \alpha(u)\cdot \{g_{\gamma(u)}^{n-1}\}=\int_{X_{\gamma(u)}}\Omega\wedge g_{\gamma(u)}^{n-1},\nonumber
 \eeq
which is point-wisely smoothly dependent on $u\in [0,1]$. Therefore $\alpha(u)\cdot \{g_{\gamma(u)}^{n-1}\}$ is finite for $u\in [0,1]$. Thus there exits a constant $C$ such that
\beq\nonumber
0<\int_{X_{\gamma(u)}}\omega_{\gamma(u)}\wedge g_{\gamma(u)}^{n-1}<C,
\eeq
for $u\in (0,1]$ after shrinking the disc small enough. Thus there exists a weakly convergent sequence $\omega_{\gamma(u_k)}\to T$ as $u_k\to 0$. The limit current $T$ is a positive $d$-closed $(1,1)$-currents. Let $ T_{ac}$ be the absolutely continuous part of $T$. The semi-continuity property \cite[Proposition 2.1]{Bou} for the top power of the absolutely continuous part in $(1, 1)$-currents shows that,
\beq\nonumber
T^n_{ac}\ge \lim \sup \omega_{\gamma(u_k)}^n.
\eeq
Thus
\beq
\int_{X_0}T^n_{ac}\ge\int_{X_{\gamma(u_{k})}}\omega_{\gamma(u_{k})}^n=c>0.\nonumber
\eeq

\end{proof}

\subsection{Cohomology under Special Metrics}
In this section we study the behaviors of the cohomologies of compact complex manifolds equipped with special  metrics.

\begin{prop}\label{derh}
Let $X$ be a compact complex manifold of dimension $n$, equipped with a metric $\omega$ such that $\partial\dbar\omega=\partial\dbar(\omega^2)=0$. Then for  $0\le p\le n$ the identity map
$$\iota_{p}:H^{p,0}_{\dbar}(X,\CC)\to H^{p}_{dR}(X,\CC),
$$
is well-defined and injective.  Similarly, the conjugate map
$$
\psi_p: H^{p,0}_{\dbar}(X,\CC)\to H^{0,p}_{\dbar}(X,\CC)
$$
is also well-defined and injective.
\end{prop}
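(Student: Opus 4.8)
The plan is to show that a $\dbar$-closed $(p,0)$-form is automatically $d$-closed (so that $\iota_p$ is well defined on cohomology), and then that a $(p,0)$-form which is $d$-exact must already be zero in $H^{p,0}_{\dbar}(X,\CC)$ (so that $\iota_p$ is injective). For a $(p,0)$-form $\eta$ with $\dbar\eta=0$, we automatically have $\partial\eta$ of bidegree $(p+1,0)$; the subtlety is that $\partial\eta$ need not vanish in general, so $\eta$ need not be $d$-closed. To force $\partial\eta=0$, I would pair $\partial\eta$ against itself using the metric: consider the integral $\int_X \partial\eta\wedge\overline{\partial\eta}\wedge\omega^{n-p-1}$, which up to a positive constant equals $\|\partial\eta\|^2$ with respect to a natural pointwise pairing, since $\partial\eta\wedge\overline{\partial\eta}$ is a $(p+1,p+1)$-form and wedging with $\omega^{n-p-1}$ produces a top form whose integrand is non-negative. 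Now integrate by parts: $\int_X \partial\eta\wedge\overline{\partial\eta}\wedge\omega^{n-p-1} = \pm\int_X \eta\wedge\partial\big(\overline{\partial\eta}\wedge\omega^{n-p-1}\big)$, and since $\overline{\partial\eta}=\overline{\partial}\,\bar\eta$ has pure bidegree $(0,p+1)$ and $\partial\dbar(\omega^{n-p-1})=0$ by hypothesis, together with $\partial\overline{\partial\eta}=\pm\dbar\partial\bar\eta = 0$ (using $\dbar\eta=0$ conjugated gives $\partial\bar\eta$ is $\partial$-closed... one must be careful here), the boundary term should vanish. Hence $\partial\eta=0$ and $d\eta=(\partial+\dbar)\eta=0$, so $\eta$ defines a class in $H^p_{dR}(X,\CC)$.

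For well-definedness at the level of cohomology, I must also check that if $\eta=\dbar\beta$ for some $(p,-1)$-form — which is impossible for degree reasons, so in fact $H^{p,0}_{\dbar}$ has no $\dbar$-coboundaries and the map on representatives descends trivially to cohomology classes. The real content is injectivity: suppose $\eta$ is a $\dbar$-closed $(p,0)$-form with $\eta=d\xi$ for some $(p-1)$-form $\xi=\xi^{p-1,0}+\cdots$; comparing bidegrees in $d\xi=\partial\xi+\dbar\xi$, the $(p,0)$-component gives $\eta=\partial\xi^{p-1,0}+\dbar\xi^{p,-1}=\partial\xi^{p-1,0}$ (the second term vanishing for degree reasons). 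So I must show a $\dbar$-closed $(p,0)$-form of the shape $\partial\xi^{p-1,0}$ is $\dbar$-exact — but again it is a $(p,0)$-form, and the only $(p,0)$-forms that are $\dbar$-exact are zero, so I actually need to show such $\eta$ is the zero cohomology class, i.e.\ $\eta=0$ as a form. For this I would again use the $L^2$-pairing: $\int_X \eta\wedge\bar\eta\wedge\omega^{n-p}$ computes $\|\eta\|^2$ up to a positive constant; substituting $\eta=\partial\xi^{p-1,0}$ and integrating by parts, using $\partial\dbar(\omega^{n-p})=0$ and $\dbar\bar\eta=0$ (the conjugate of $\partial\eta=0$, which we established above, or directly $\dbar\overline{\partial\xi^{p-1,0}}=\overline{\partial\dbar\xi^{p-1,0}}$ and one checks this vanishes), the integral reduces to a boundary term that vanishes. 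Hence $\eta=0$, giving injectivity. The conjugate statement $H^{p,0}_{\dbar}\to H^{0,p}_{\dbar}$, $[\eta]\mapsto[\bar\eta]$, is handled the same way: $\bar\eta$ is $\dbar$-closed because $\partial\eta=0$ by the first part, and the same $L^2$ estimate shows the map has trivial kernel.

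The main obstacle I anticipate is making the integration-by-parts steps rigorous: one must track the precise signs coming from moving $\partial$ or $\dbar$ across forms of various bidegrees, and more importantly one must verify that the relevant auxiliary forms are genuinely $\partial$- or $\dbar$-closed so that Stokes' theorem produces no boundary contribution. The two metric hypotheses $\partial\dbar(\omega^{n-p-1})=0$ and $\partial\dbar(\omega^{n-p})=0$ are evidently tailored exactly to kill the terms that arise when the derivative hits the power of $\omega$ in the two respective integrals (the first for $d$-closedness, the second for injectivity), so the structure of the argument is forced; the care is purely in the bookkeeping of bidegrees and in confirming that $\dbar\eta=0$ indeed propagates (via conjugation and the first conclusion $\partial\eta=0$) to the closedness of all the auxiliary forms appearing under the integral sign. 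A secondary point to be careful about is the positivity of the pointwise integrand $\sqrt{-1}^{\,?}\,\eta\wedge\bar\eta\wedge\omega^{n-p}$: one needs the correct power of $\sqrt{-1}$ and to invoke that $\omega$ is a positive $(1,1)$-form (i.e.\ a genuine Hermitian metric) so that this wedge is a non-negative volume form, which is what lets us conclude $\eta=0$ from the vanishing of its integral.
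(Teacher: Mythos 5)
Your proposal follows essentially the same route as the paper's proof: both show that a $\dbar$-closed $(p,0)$-form is automatically $\partial$-closed (hence $d$-closed) by proving that $\int_X\partial\phi\wedge\overline{\partial\phi}\wedge\omega^{n-p-1}$ is simultaneously a non-negative quantity and zero (via Stokes together with $\partial\dbar\omega^{n-p-1}=0$), and then obtain injectivity of $\iota_p$ and of the conjugate map from the analogous integral against $\omega^{n-p}$ using $\partial\dbar\omega^{n-p}=0$. The one ingredient you leave implicit --- that a suitable power of $\sqrt{-1}$ times $\sigma\wedge\bar\sigma\wedge\omega^{n-k}$ is a pointwise non-negative multiple of the volume form for a $(k,0)$-form $\sigma$ --- is exactly what the paper supplies by the explicit computation in local coordinates diagonalizing $\omega$, so your argument matches the paper's step for step.
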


\begin{proof}Since $\partial\dbar\omega=\partial\dbar(\omega^2)=0$, it follows that $\partial\dbar\omega^{p}=0$ for all $1\le p\le n$.
Let $z\in X$ and choose local coordinates $(z_1,\cdots,z_n)$ around $z$ such that the metric $\omega$ has the form
$$\omega=\sqrt{-1}\sum_j\gamma_j(z)dz_j\wedge d\bar{z}_j$$
where $\gamma_j(z)>0$. Let $\alpha=\partial \phi^{p,0}$, where $\phi^{p,0}$ is a $(p,0)$-form, and assume that $\dbar\alpha=\dbar\partial \phi^{p,0}=0 $. Assume that $\partial\phi^{p,0}=\sum_{I}f_{I}dz_{I}$ is a summation of $\dbar$-closed $(p+1,0)$-forms, where $I=(i_1,\cdots, i_{p+1})$ is an ordered subset of $\{1,\cdots,n\}$ with $|I|=p+1$ and $i_1<\cdots<i_{p+1}$. Then
\begin{align*}
 & \quad \partial\phi^{p,0}\wedge\dbar\overline{\phi^{p,0}}\wedge\omega^{n-p-1} \\
 =& \quad C\bigg(\sum_{I} |f_{I}|^2
\gamma_{n-I}(z) \bigg)(\sqrt{-1})^ndz_1\wedge d\bar{z}_1\wedge\cdots\wedge dz_n\wedge d\bar{z}_n
\end{align*}
where $\gamma_{n-I}(z):=\prod_{k\notin I}\gamma_k(z)$ and $C=(-1)^{\frac{|I|(|I|-1)}{2}}(-\sqrt{-1})^{|I|}$ is a nonzero constant.
So
\begin{align*}
 &  \quad C\int_{X} \bigg(\sum_{I} |f_{I}|^2\gamma_{n-I}(z)\bigg)(\sqrt{-1})^ndz_1\wedge d\bar{z}_1\wedge\cdots\wedge dz_n\wedge d\bar{z}_n \\
 = & \quad \int_{X}\partial\phi^{p,0}\wedge\dbar\overline{\phi^{p,0}}\wedge\omega^{n-p-1}=0.
\end{align*}
Thus $\alpha=\partial\phi^{p,0}=0$.

Now let $\alpha^{p,0}\in H_{\dbar}^{p,0}(X,\CC)$, then $\dbar\partial\alpha^{p,0}=0$. So the argument above shows that $\partial\alpha^{p,0}=0$.  Therefore, every $\dbar$-closed $(p,0)$-form is also $d$-closed.
The identity map
\beq\nonumber
\iota_{p}:H^{p,0}_{\dbar}(X,\CC)\to H^{p}_{dR}(X,\CC),
\eeq
is thus well-defined.

To prove that $\iota_{p}$ is injective, suppose that $\iota_{p}(\beta^{p,0})=0$ for some $\dbar$-closed $(p,0)$-form $\beta^{p,0}$. Then $\beta^{p,0}=\partial\gamma^{p-1,0}$, where $\gamma^{p-1,0}$ is a $(p-1,0)$-form. Using the same argument as above, we conclude that $\beta^{p,0}=0$.
Thus the map $\iota_{p}$ is injective.

Because the $\dbar$-closed $p$-form is also $d$-closed, the conjugate map
\beq\nonumber
\psi_p: H^{p,0}_{\dbar}(X,\CC)\to H^{0,p}_{\dbar}(X,\CC)
\eeq
is well-defined. To prove that $\psi_p$ is injective, suppose that $\psi_p(\beta)=\bar{\beta}=0\in H^{0,p}_{\dbar}(X,\CC)$ for some $\beta\in H^{p,0}_{\dbar}(X,\CC)$. Then there exists a form $\alpha^{0,p-1}\in \cE^{0,p-1}(X)$ such that $\bar{\beta}=\dbar\alpha^{0,p-1}$.
So by the same calculation as above, we have
 \beq\nonumber
 \int_{X}\partial\overline{\alpha^{0,p-1}}\wedge\dbar\alpha^{0,p-1}\wedge\omega^{n-p}=0,
 \eeq
which implies that $\beta=0$. Therefore the conjugate map $\psi_p$ is also injective.
\end{proof}

Let $\cP$ be the sheaf of real-valued pluriharmonic functions over a compact complex manifold $X$. We have the following fine resolution,
\beq\nonumber
\begin{CD}0@>>>\cP@>>>
(\cA_X)_{\RR}@>^{\sqrt{-1}\partial\dbar}>>(\cA^{1,1}_X)_{\RR}\\
@>^d>>(\cA^{2,1}_X\oplus \cA^{1,2}_X)_{\RR}@>^d>>\cdots.
\end{CD}
\eeq
From \cite[Page 20]{Mch}, we know that the  $\dim_{\RR}H^{1}(X,\cP)$ is finite, and
\beq\nonumber\dim_{\RR}H^{1}(X,\cP)=\dim_{\CC} H_{BC}^{1,1}(X,\CC).\eeq

On the other hand, consider the following short exact sequence.

\beq\nonumber
\begin{CD}0@>>>\RR@>>>
\cO_X@>>>\cP@>>>0.
\end{CD}
\eeq
This gives rise to the following long exact sequence on sheaf cohomology
\beq\nonumber
\begin{CD}0@>>>H^0(X,\RR)@>>>H^0(X,\cO_X)@>>> H^0(X,\cP)\\@>>>H^1(X,\RR)
@>>>H^1(X,\cO_X)@>>> H^1(X,\cP)\\@>>>H^2(X,\RR)@>^{\pi^{0,2}}>>H^2(X,\cO_X).
\end{CD}
\eeq
The first three terms in the sequence form a short exact sequence
\beq\nonumber
\begin{CD}0@>>>\RR@>>>
\RR^{\oplus 2}@>>>\RR@>>>0.
\end{CD}
\eeq
So we have
\beq\label{seq-4}
\begin{CD}0@>>>H^1(X,\RR)
@>>>H^1(X,\cO_X)@>>> H^1(X,\cP)\\
@>>>H^2(X,\RR)@>^{\pi^{0,2}}>>H^2(X,\cO_X).
\end{CD}
\eeq
The morphism $\pi^{0,2}$ is the projection of $H^2(X,\RR)\subset H^2(X,\CC)$ to the $(0,2)$-part $H^{0,2}_{\dbar}(X,\CC)$ through the isomorphism $H^{0,2}_{\dbar}(X,\CC)\cong H^2(X,\cO_X)$.
Let $H^{1,1}(X,\RR)=\mathrm{ker}\, \pi^{0,2}$ be the subgroup of $H^2(X,\RR)$ consisting of $d$-closed real $(1,1)$-forms. Then one has the
exact sequence of real vector spaces,
\beq\nonumber
\begin{CD}0@>>>H^1(X,\RR)
@>>>H^1(X,\cO_X)@>>> H^1(X,\cP)\\
@>>>H^{1,1}(X,\RR)@>>>0.
\end{CD}
\eeq

Thus, we have the following theorem.

\begin{theo}\cite[Theorem 6.3]{Mch}
Let $X$ be a compact complex manifold, then
\beq\nonumber
h^{1,1}_{BC}(X)=2h^{0,1}_{\dbar}-h^1(X)+\dim_{\RR} H^{1,1}(X,\RR).
\eeq
\end{theo}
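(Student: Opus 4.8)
The plan is to read the identity directly off the four-term exact sequence of real vector spaces
\[
0 \to H^1(X,\RR) \to H^1(X,\cO_X) \to H^1(X,\cP) \to H^{1,1}(X,\RR) \to 0
\]
assembled just above the statement, using the fact that the alternating sum of the $\RR$-dimensions of the terms of a finite exact sequence of (finite-dimensional) vector spaces vanishes. So the first step is simply to write
\[
\dim_\RR H^1(X,\RR) - \dim_\RR H^1(X,\cO_X) + \dim_\RR H^1(X,\cP) - \dim_\RR H^{1,1}(X,\RR) = 0 .
\]

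Next I would identify the first three $\RR$-dimensions in terms of the invariants appearing in the statement. For the first term, $H^1_{dR}(X,\CC) = H^1(X,\RR)\otimes_\RR \CC$ gives $\dim_\RR H^1(X,\RR) = \dim_\CC H^1_{dR}(X,\CC) = h^1(X)$. For the middle term, the Dolbeault isomorphism $H^1(X,\cO_X)\cong H^{0,1}_{\dbar}(X,\CC)$ identifies it as a complex vector space, so its underlying real vector space has $\dim_\RR H^1(X,\cO_X) = 2h^{0,1}_{\dbar}(X)$. For the third term, the fine resolution of $\cP$ recalled from \cite{Mch} gives $\dim_\RR H^1(X,\cP) = h^{1,1}_{BC}(X)$. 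Substituting these into the vanishing alternating sum yields
\[
h^1(X) - 2h^{0,1}_{\dbar}(X) + h^{1,1}_{BC}(X) - \dim_\RR H^{1,1}(X,\RR) = 0 ,
\]
and rearranging gives exactly $h^{1,1}_{BC}(X)=2h^{0,1}_{\dbar}(X)-h^1(X)+\dim_{\RR} H^{1,1}(X,\RR)$.

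I do not expect a genuine obstacle here: all the substantive content --- the exactness of the long sequence coming from $0\to\RR\to\cO_X\to\cP\to0$, the collapse of its first three terms into $0\to\RR\to\RR^{\oplus 2}\to\RR\to0$, and the computation $\dim_\RR H^1(X,\cP)=h^{1,1}_{BC}(X)$ --- has already been established before the statement. The only point demanding care is the bookkeeping of real versus complex dimensions: one must keep in mind that the whole sequence lives in the category of real vector spaces (the connecting maps do not respect the complex structure on $H^1(X,\cO_X)$), so that $H^1(X,\cO_X)$ enters with its real dimension $2h^{0,1}_{\dbar}(X)$ rather than with $h^{0,1}_{\dbar}(X)$, and that $H^{1,1}(X,\RR)$ is by definition the real subspace $\ker\pi^{0,2}$, whose dimension is left as the stated invariant.
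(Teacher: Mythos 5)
Your proposal is correct and is exactly the argument the paper intends: the theorem is stated as the immediate consequence ("Therefore we have the following theorem") of the four-term exact sequence $0\to H^1(X,\RR)\to H^1(X,\cO_X)\to H^1(X,\cP)\to H^{1,1}(X,\RR)\to 0$ derived just above it, read off via the vanishing alternating sum of real dimensions together with the identifications $\dim_\RR H^1(X,\cP)=h^{1,1}_{BC}(X)$ and $\dim_\RR H^1(X,\cO_X)=2h^{0,1}_{\dbar}(X)$. Your care about real versus complex dimensions is the right (and only) delicate point.
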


\begin{prop}\label{prop-good} Let $f:X\to \Delta$ be a smooth family of compact complex manifolds. Assume that $h_{\dbar}^{0,2}(X_t)$ is constant for all $t\in\Delta$, and there exists a Hermitian metric $\omega$ on the central fiber $X_0$ satisfying $\partial\dbar\omega=\partial\dbar\omega^2=0$. Then
\beq\nonumber
h^2(X_0)= 2h^{0,2}_{\dbar}(X_0)+h^{1,1}_{BC}(X_0).
\eeq
\end{prop}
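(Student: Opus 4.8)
The plan is to combine the various cohomological identities already assembled in this section, together with the vanishing/injectivity results of Proposition \ref{derh}, to force the desired equality. We start from the general inequality $h^2(X_0)\le 2h^{0,2}_{\dbar}(X_0)+h^{1,1}_{BC}(X_0)$ recorded above; we must prove the reverse inequality. The key input is that the exact sequence \eqref{sequence-1}, namely $H^{1,1}_{BC}(X_0,\CC)\xrightarrow{\phi}H^2_{dR}(X_0,\CC)\xrightarrow{u}H^{2,0}_{\partial}(X_0,\CC)\oplus H^{0,2}_{\dbar}(X_0,\CC)$, shows equality holds precisely when $\phi$ is injective and $u$ is surjective. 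So it suffices to show, under the stated hypotheses, that $\phi$ is injective and $u$ is surjective.

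First I would exploit the metric condition. By hypothesis $\omega$ satisfies $\partial\dbar\omega^{n-3}=\partial\dbar\omega^{n-2}=0$, which is exactly the case $p=2$ of Proposition \ref{derh} (note $n-p-1=n-3$ and $n-p=n-2$). That proposition then tells us the natural map $\iota_2:H^{2,0}_{\dbar}(X_0,\CC)\to H^2_{dR}(X_0,\CC)$ is well-defined and injective, and that the conjugation map $\psi_2:H^{2,0}_{\dbar}(X_0,\CC)\to H^{0,2}_{\dbar}(X_0,\CC)$ is well-defined and injective; in particular $H^{2,0}_{\dbar}(X_0)$ consists of $d$-closed forms, so $h^{2,0}_{\dbar}(X_0)=h^{0,2}_{\dbar}(X_0)$, and the $(2,0)$-classes inject into de Rham cohomology. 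Combined with the conjugate statement this gives that $H^{2,0}_\partial(X_0,\CC)\oplus H^{0,2}_{\dbar}(X_0,\CC)$ maps into $H^2_{dR}$ via a section of $u$ on the $(0,2)$-part, hence the $(0,2)$-component of $u$ is surjective; surjectivity of the $(2,0)$-component follows by conjugation. This handles surjectivity of $u$.

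The remaining point — and I expect it to be the main obstacle — is the injectivity of $\phi:H^{1,1}_{BC}(X_0)\to H^2_{dR}(X_0)$. Here I would bring in the constancy of $h^{0,2}_{\dbar}(X_t)$ and McHugh's identity $h^{1,1}_{BC}(X)=2h^{0,1}_{\dbar}(X)-h^1(X)+\dim_\RR H^{1,1}(X,\RR)$, together with the exact sequence \eqref{seq-4}. The kernel of $\phi$ is governed by the image of $H^1(X_0,\cP)\to H^{1,1}(X_0,\RR)$ versus the genuinely $d$-exact $(1,1)$-classes; the sequence derived from \eqref{seq-4} identifies $H^1(X_0,\cP)/\mathrm{image}$ with $H^{1,1}(X_0,\RR)\subset H^2_{dR}(X_0)$, and one must check that no nonzero Bott-Chern $(1,1)$-class dies in $H^2_{dR}$. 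This is where I would either invoke the $\partial\dbar$-type closedness of $\omega$ once more (to run the same integration-by-parts argument as in Proposition \ref{derh}, now against a $(1,1)$-potential) or, alternatively, carry out a dimension count: using the upper semicontinuity of $h^{1,1}_{BC}$ and $h^{0,2}_{\dbar}$, the constancy of $h^{0,2}_{\dbar}$, and the fact that the nearby fibers are Kähler (so equality holds there), one deduces that equality must already hold on $X_0$ — essentially the same bookkeeping as in the proof of Theorem \ref{family-1}, but now with the metric hypothesis supplying the extra rigidity needed to promote $h^{2,0}_{\dbar}(X_0)=h^{0,2}_{\dbar}(X_0)$ and close the numerical gap. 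Assembling these, $h^2(X_0)=h^{2,0}_{\dbar}(X_0)+h^{0,2}_{\dbar}(X_0)+\dim\image\phi=2h^{0,2}_{\dbar}(X_0)+h^{1,1}_{BC}(X_0)$, which is the claimed equality.
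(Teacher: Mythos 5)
Your reduction of the equality to ``$\phi$ injective and $u$ surjective'' in \eqref{sequence-1} is a reasonable reformulation, and your treatment of the surjectivity of $u$ is essentially what the paper does: note, though, that Proposition \ref{derh} with $p=2$ gives only the inequality $h^{2,0}_{\dbar}(X_0)\le h^{0,2}_{\dbar}(X_0)$ (injectivity of $\psi_2$ does not by itself give the equality you assert at first); the reverse inequality comes from the constancy of $h^{0,2}_{\dbar}$, Hodge symmetry on the nearby K\"ahler fibers, and upper semi-continuity of $h^{2,0}_{\dbar}$. Once $\psi_2$ is an isomorphism and the $(2,0)$-classes are $d$-closed, surjectivity of the projection onto the $(2,0)\oplus(0,2)$ part follows. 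Up to the misordering of these steps, that half is fine.

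The genuine gap is the injectivity of $\phi$, which you correctly identify as the main obstacle but do not prove. Neither of your two proposed routes closes it. The dimension count ``as in the proof of Theorem \ref{family-1}'' is circular: that proof deduces injectivity of $\phi$ on $X_{b_0}$ \emph{from} the hypothesis $h^2(X_{b_0})=2h^{0,2}_{\dbar}(X_{b_0})+h^{1,1}_{BC}(X_{b_0})$, which is exactly the identity Proposition \ref{prop-good} is supposed to establish. The integration-by-parts route would require showing that a $d$-exact $(1,1)$-form $\dbar\beta^{1,0}+\partial\beta^{0,1}$ lies in $\im\partial\dbar$, i.e.\ an instance of the $\partial\dbar$-lemma in this bidegree, and the hypotheses $\partial\dbar\omega^{n-2}=\partial\dbar\omega^{n-3}=0$ do not obviously yield this; you give no argument. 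The paper sidesteps $\phi$ entirely: it computes $h^2(X_0)$ from the long exact sequence \eqref{seq-4} attached to $0\to\RR\to\cO_{X_0}\to\cP\to0$, using the surjectivity of $\pi^{0,2}$ established above, McHugh's identity $\dim_{\RR}H^1(X_0,\cP)=h^{1,1}_{BC}(X_0)$, and the fact from \cite[Proposition 4.12]{RT} that constancy of $h^{0,2}_{\dbar}$ forces constancy of $h^{0,1}_{\dbar}$, whence $h^1(X_0)=2h^{0,1}_{\dbar}(X_0)$. These three inputs, none of which appear in a load-bearing way in your sketch, are what make the bookkeeping close; as written, your proposal leaves the key step unproved.
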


\begin{proof}
Since the Hermitian metric $\omega$ on $X_0$ satisfies $\partial\dbar\omega=\partial\dbar\omega^2=0$, by Proposition~\ref{derh}, we have the following inequality
\beq\nonumber
h_{\dbar}^{0,2}(X_0)\ge h_{\dbar}^{2,0}(X_0).
\eeq

From the assumptions we know that for $t\neq0$,

\beq\nonumber
h_{\dbar}^{2,0}(X_t)=h_{\dbar}^{0,2}(X_t)=h_{\dbar}^{0,2}(X_0).
\eeq

Thus
 \beq\nonumber
 h_{\dbar}^{2,0}(X_t)\ge h_{\dbar}^{2,0}(X_0).
 \eeq
On the other hand, by the upper semi-continuity of the Dolbeault cohomology, we know that
 \beq\nonumber
h_{\dbar}^{2,0}(X_t)\le h_{\dbar}^{2,0}(X_0).
\eeq
So
\beq\nonumber
h_{\dbar}^{2,0}(X_t)= h_{\dbar}^{2,0}(X_0).
\eeq
It follows that
\beq\nonumber
h_{\dbar}^{0,2}(X_0)= h_{\dbar}^{2,0}(X_0).
\eeq
Combining it with Proposition~\ref{derh}, the morphism  $H^{2,0}_{\dbar}(X_0,\CC)\to H^2_{dR}(X_0,\CC)$ is injective and the morphism $\psi_2:H^{2,0}_{\dbar}(X_0,\CC)\to H^{0,2}_{\dbar}(X_0,\CC)$ is an isomorphism. Consequently, the projection
\beq\nonumber
H^2_{dR}(X_0,\CC)\to H^2(X_0,\cO_{X_0})\cong H^{0,2}_{\dbar}(X_0,\CC)
\eeq
is surjective. Therefore, the projection $\pi^{0,2}: H^2(X_0,\RR)\to H^2(X_0,\cO_{X_0})$ in \eqref{seq-4} is also surjective, yielding the exact sequence:

\beq\label{seq-5}
\begin{CD}0@>>>H^1(X_0,\RR)
@>>>H^1(X_0,\cO_X)@>>> H^1(X_0,\cP)\\
@>>>H^2(X_0,\RR)@>^{\pi^{0,2}}>>H^2(X_0,\cO_{X_0})@>>>0.
\end{CD}
\eeq
Therefore,
\begin{eqnarray*}
\dim_{\RR}H^2(X_0,\RR)&=& \dim_{\RR} H^2(X_0,\cO_{X_0})+\dim_{\RR} H^1(X_0,\cP)\\
&&-\dim_{\RR} H^1(X_0,\cO_{X_0})+\dim_{\RR} H^1(X_0,\RR)\\
&=& 2h^{0,2}_{\dbar}(X_0)+\dim_{\RR} H^{1}(X_0,\cP)+h^1(X_0)-2h^{0,1}_{\dbar}(X_0).
\end{eqnarray*}

By \cite[Proposition 4.12]{RT}, the invariance of $h^{0,2}_{\dbar}(X_t)$ implies that $h^{0,1}_{\dbar}(X_t)$ is also invariant.
Hence for $t\neq0$,
\beq\nonumber
h^1(X_0)=h^1(X_t)=2h^{0,1}_{\dbar}(X_t)=2h^{0,1}_{\dbar}(X_0).
\eeq
So from the exact sequence \eqref{seq-5}, we have
\begin{eqnarray*}
h^2(X_0)=\dim_{\RR}H^2(X_0,\RR)&=& 2h^{0,2}_{\dbar}(X_0)+\dim_{\RR} H^{1}(X_0,\cP)\\
&=& 2h^{0,2}_{\dbar}(X_0)+h^{1,1}_{BC}(X_0).
\end{eqnarray*}

\end{proof}

\section{Proof of the Theorems}

\begin{proof}[Proof of Theorem~\ref{main-2}] Since $X_0$ admits a metric $\omega$ such that $\partial\dbar\omega=0$ and $\partial\omega\wedge\dbar\omega=0$, it follows that $\partial\dbar \omega^p=0$ for all $0\le p\le n$. By Proposition~\ref{prop-good}, Lemma~\ref{main0} and Theorem~\ref{cri}, we conclude that  $X_0$ is a Kähler manifold.
\end{proof}

\begin{proof}[Proof of Corollary \ref{main-3}]

Since $X_0$ is a compact complex surface, the Frölicher spectral sequence degenerates at $E_1$. By \cite[IV, Theorem 2.9,\,Proposition 2.10]{BHPV}, we have
\beq\nonumber H_{dR}^2(X_0,\CC)=\bigoplus_{p+q=2} \,'F^pH^2(X_0,\CC)\cap \overline{'F}^qH^2(X_0,\CC).\eeq Thus
\beq\nonumber
h^2(X_0)=h^{2,0}_{\dbar}(X_0)+h^{1,1}_{\dbar}(X_0)+h^{0,2}_{\dbar}(X_0).
\eeq
By the upper semi-continuity of the Dolbeault cohomology, for $t\neq 0$, we have

\beq\nonumber
h^{0,2}_{\dbar}(X_0)= h^{0,2}_{\dbar}(X_t).
\eeq

Additionally, the Gauduchon metric on $X_0$ satisfies $\partial\dbar \omega=0$. Consequently, by Lemma~\ref{main0} and Theorem \ref{cri}(or \cite[Theorem 0.2]{Ch}), we conclude that $X_0$ is a Kähler surface.

\end{proof}


\begin{thebibliography}{99}



\bibitem{Ba1} Barlet, D. \emph{Two semi-continuity results for the algebraic dimension of compact complex manifolds}, J. Math. Sci. Univ. Tokyo {\bf 22} (2015), no. 1, 39--54.

\bibitem{BHPV} Barth, W., Hulek, K., Peters, C., Van de Ven, A. \emph{Compact complex surfaces}, Springer-Verlag, 2004.


\bibitem{Bi}Bishop, E. \emph{Conditions for the analyticity of certain sets}, Michigan Math. J. {\bf 11} (1964), 289--304.

\bibitem{Bou} Boucksom, S. \emph{On the volume of a line bundle}, Internat. J. Math. {\bf 13} (2002), no. 10, 1043--1063.

\bibitem{Bu}Buchdahl, N. \emph{On compact Kähler surfaces}, Ann. Inst. Fourier {\bf 49} (1999), no. 1,  287--302.


\bibitem{Ch}Chiose, I. \emph{Obstructions to the existence of Kähler structures on compact complex manifolds}, Proc. Amer. Math. Soc. {\bf 142} (2014), no. 10, 3561--3568.


\bibitem{Dem00} Demailly, J-P. \emph{Regularization of closed positive currents and intersection theory}, J. Algebraic Geom. {\bf 1} (1992), no. 3, 361--409.

\bibitem{Dem1} Demailly, J-P. \emph{Analytic methods in algebraic geometry},  Higher Education Press, Beijing, 2010.

\bibitem{Dem2} Demailly, J-P., Păun, M.  \emph{Numerical characterization of the Kähler cone of a compact Kähler manifold}, Ann. of Math. (2) {\bf 159} (2004), no. 3, 1247--1274.

\bibitem{FT} Fino, A., Tomassini, A., \emph{On astheno-K\"ahler metrics}, J. Lond. Math. Soc. (2) 83 (2011), no. 2, 290--308.

\bibitem{F0} Fujiki, A., \emph{Countability of the Douady space of a complex space}, Japan. J. Math. (N.S.) 5 (1979), no. 2, 431--447.

\bibitem{F1} Fujiki, A. \emph{Closedness of the Douady spaces of compact Kähler spaces}, Publ. Res. Inst. Math. Sci. 14 (1978/79), no. 1, 1--52.


\bibitem{Hi62} Hironaka, H. \textit{An example of a non-K\"ahlerian complex-analytic deformation of K\"ahlerian complex structures}, Ann. of Math. (2) 75 (1962), 190--208.


\bibitem{KS3}Kodaira, K., Spencer, D. \emph{On deformations of complex analytic structures, III. Stability theorems for complex structures}, Ann. of Math. (2) {\bf 71} (1960), 43--76.

\bibitem{La} Lamari, A. \emph{ Courants k\"ahl\'eriens et surfaces compactes}, Ann. Inst. Fourier (Grenoble) {\bf 49} (1999), no. 1, 263--285.


\bibitem{Mch} McHugh, A. \emph{Bott--Chern-Aeppli, Dolbeault, and Frölicher on compact complex 3-folds} Riv. Math. Univ. Parma (N.S.) {\bf 10} (2019), no. 1, 25--62.


\bibitem{pop1} Popovici, D. \emph{Deformation limits of projective manifolds: Hodge numbers and strongly Gauduchon metrics}, Invent. Math. {\bf 194} (2013), no. 3, 515--534.

\bibitem{pop2} Popovici, D. \emph{Limits of Moishezon manifolds under holomorphic deformations}, arXiv:1003.3605 [math.AG].

\bibitem{RT} Rao, S., Tsai, I-H. \emph{Deformation limit and bimeromorphic embedding of Moishezon manifolds}, Commun. Contemp. Math. {\bf 23} (2021), no. 8, 2050087, 50 pp.

\bibitem{Sch} Schweitzer, M. \emph{Autour de la cohomologie de Bott--Chern}, preprint (2007), arXiv: 0709.3528v1 [math.AG].

\bibitem{Siu} Siu, Y. \emph{Every K3 Surface is Kähler}, Invent. Math. {\bf 73} (1983), 139--150.


\bibitem{Va} Varouchas, J., \emph{K\"ahler spaces and proper open morphisms}, Math. Ann. 283 (1989), no. 1, 13--52.

\bibitem{Wang} Wang, Z. \emph{On the volume of a pseudo-effective class and semi-positive properties of the Harder-Narasimhan filtration on a compact Hermitian manifold}, Ann. Polon. Math. {\bf 117} (2016), no. 1, 41--58.

\end{thebibliography}
\end{document}